\documentclass{birkjour}
\usepackage{amsmath,amssymb,amsthm,verbatim}
\usepackage{color,graphics,srcltx}
\usepackage{xcolor}
\theoremstyle{plain}
\newtheorem{theorem}{Theorem}[section]
\newtheorem{lemma}[theorem]{Lemma}

\newtheorem{proposition}[theorem]{Proposition}

\numberwithin{equation}{section}
\allowdisplaybreaks[1]

\theoremstyle{definition}

\theoremstyle{remark}

\newcommand{\sbm}[1]{\left[\begin{smallmatrix} #1
                \end{smallmatrix}\right]}
\newcommand{\spm}[1]{\left(\begin{smallmatrix} #1
                \end{smallmatrix}\right)}
\makeatletter
\def\revddots{\mathinner{\mkern1mu\raise\p@
\vbox{\kern7\p@\hbox{.}}\mkern2mu
\raise4\p@\hbox{.}\mkern2mu\raise7\p@\hbox{.}\mkern1mu}}
\makeatother

\def\IF{{\mathbb F}}

\def\IC{{\mathbb C}}
\def\IR{{\mathbb R}}

\def\cB{{\mathcal B}}

\def\cR{{\mathcal R}}

\def\cH{{\mathcal H}}
\def\cT{{\mathcal T}}

\def\be{{\mathbf e}}

\def\b1{{\mathbf 1}}

\def\char{{\rm char}}
\begin{document}
\title[Preserver problems]{Preserver problems on Toeplitz matrices}
\author[R. Ahmed]{Rayhan Ahmed}
\email{rahmed03@wm.edu}
\author[V. Bolotnikov]{Vladimir Bolotnikov}
\email{vladi@math.wm.edu}
\author[W. Hoyle]{William Hoyle}
\email{whoyle@wm.edu}
\author[C.-K. Li]{Chi-Kwong Li}
\email{ckli@math.wm.edu}
\address{Department of Mathematics,  William \& Mary,
Williamsburg VA 23187-8795, USA}
\begin{abstract}
We study linear preserver problems on the linear space of $n\times n$ 
Toeplitz matrices over the real field or the complex field. 
In particular, characterizations are given 
for linear preservers of rank one matrices and linear preservers of
the determinant. We also present related results and questions on other structured 
matrices.
\end{abstract}

\subjclass{15A86, 15B05}
\keywords{Linear preserver, Toeplitz matrix, rank one matrix, confluent Vandermonde matrix}
\dedicatory{In memory of Leiba Rodman}
\maketitle

\section{Introduction}

Linear preserver problems form a classical and active area of matrix theory and 
operator algebras. They concern the characterization of linear maps on spaces of 
matrices (or operators) that leave invariant certain properties, functions, sets 
or relations \cite{LP,LT}. 
The subject originated with Frobenius \cite{Fr} who determined 
the precise form of linear preservers of the determinant on $M_n$, the set of $n\times n$ complex matrices. 
It was shown that such a map has the form 
\begin{equation}
A \mapsto M A N\quad\mbox{or}\quad A \mapsto M A^\top N
\label{class}
\end{equation}
with $\det(MN)=1$. The result was extended to matrices of arbitrary fields by 
Dieudonn\'{e} in \cite{D}. Marcus and Moyls in \cite{MM1,MM2} determined the structure 
of linear rank preservers 
The preservers also have the form \eqref{class} for some nonsingular matrices $M$ and $N$.
It turns out that many preservers  can be shown to be rank one preservers so that 
the result of Marcus and Moyls applies. The preserving properties can then be used
to deduce additional information on the matrices $M$ and $N$.

\smallskip

In this paper, we study linear preserver problems in the linear space of $n\times n$
Toeplitz matrices over $\IF$,  where $\mathbb F$ is the complex field $\mathbb C$ or 
the real field $\mathbb R$.  Recall that 
a {\em Toeplitz matrix} (or diagonal-constant matrix) is a matrix in which each descending 
diagonal from left to right is constant. Formally, 
a matrix $A\in M_n$ is Toeplitz if its
$(i,j)$ entry 
$A_{i,j}$ depends only on the index difference, i.e., $A_{i,j} = 
a_{i-j}$ for $a_{1-n}, \dots, a_0, \dots, a_{n-1} \in \IF$.
Hence,
\[
A = \begin{bmatrix} 
a_0 & a_{-1} & a_{-2} & \dots & a_{-(n-1)} \\
a_1 & a_0 & a_{-1} & \dots & a_{-(n-2)} \\
a_2 & a_1 & a_0 & \dots & a_{-(n-3)} \\
\vdots & \vdots & \vdots & \ddots & \vdots \\
a_{n-1} & a_{n-2} & a_{n-3} & \dots & a_0
\end{bmatrix} . \]
Toeplitz matrices are useful in many topics including signal processing, numerical
computation and  operator theory; for example, see \cite{Toe1, Toe2, Toe3} and their references.
The study of preservers on Toeplitz matrices reveals new rigidity phenomena arising from the interplay between 
linearity, structure, and the preserved quantity. 

\smallskip

In this paper, we characterize  
linear maps on Toeplitz matrices preserving the determinant function
and the rank function; see Section 4. 
These are established after we obtain the basic 
result on linear preservers of rank one (Toeplitz) matrices in Section 3.
The proof depends on some basic properties of Toeplitz matrices and rank one 
Toeplitz matrices established in Section 2.  In Section 4, we present additional 
preserver results for Toeplitz matrices. 
We also discuss some related results, and further questions.

\section{Preliminary results}

Let ${\cT}_n$ be the linear space of $n\times n$ Toeplitz matrices on $\IC$ or $\IR$.
Let ${\bf e}_k$ denote the $k$-th column of the identity matrix 
$I_n=\begin{bmatrix}{\bf e}_1&\ldots&{\bf e}_n\end{bmatrix}$ and 
let $Z_n$ be the lower shift matrix defined as 
$$
Z_n=\begin{bmatrix}{\bf e}_2&\ldots&{\bf e}_n&0\end{bmatrix} = \begin{bmatrix} 
0 & & & \cr
1 & 0 & &\cr
   & \ddots & \ddots & \cr
   & & 1 & 0 \cr\end{bmatrix}.
$$ 
It is clear that $\dim {\cT}_n=2n-1$ and that
\begin{equation}
\cB = \left\{Z_n^{n-1}, \; Z_n^{n-2}, \; \ldots, \; Z_n, \; I_n, \;  Z_n^{\top},  \, \ldots, \,  
(Z^{\top}_n)^{n-2}, \, (Z^{\top}_n)^{n-1}\right\}
\label{basis}
\end{equation}
is a basis for ${\cT}_n$. We have the following characterization of rank-one Toeplitz matrices.

\begin{proposition}
A matrix $A\in {\cT}_n$ has rank one if and only if 
there are $\mu, \xi \in \IF$ with $\mu \ne 0$ such that 
$A$ equals
\begin{equation}
A_1=\mu {\bf e}_1{\bf e}_n^\top\quad\mbox{or}\quad
A_2= \mu
\sbm{\xi^{n-1}  \vspace{-1mm}\\ 
\vdots \\ \xi \\ 1}
\sbm{1&\xi &\ldots &\xi^{n-1}}.
\label{1}
\end{equation}
\label{P:1.1}
\end{proposition}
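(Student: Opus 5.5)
The easy direction is a direct check. $A_1=\mu{\bf e}_1{\bf e}_n^\top$ has a single nonzero entry, at position $(1,n)$, so it is Toeplitz (it equals $\mu(Z_n^\top)^{n-1}$) and has rank one. For $A_2$, the $(i,j)$ entry is $\mu\,\xi^{n-i}\xi^{j-1}=\mu\,\xi^{n-1-(i-j)}$. This depends only on $i-j$, so $A_2$ is Toeplitz. It is nonzero because its $(n,1)$ entry is $\mu\neq 0$, and it is an outer product, hence of rank one.

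For the converse, let $A\in\cT_n$ have rank one and write $A=uv^\top$ with $u=(u_1,\dots,u_n)^\top$ and $v=(v_1,\dots,v_n)^\top$ both nonzero. The Toeplitz condition $A_{i,j}=A_{i+1,j+1}$ reads
\[
u_iv_j=u_{i+1}v_{j+1},\qquad 1\le i,j\le n-1 .
\]
I will split into cases according to whether $u_n$ vanishes.

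Case $u_n\neq 0$. Normalize so that $u_n=1$, absorbing the scalar into $v$. Taking $i=n-1$ gives $u_{n-1}v_j=v_{j+1}$ for $j=1,\dots,n-1$. Set $\xi=u_{n-1}$; then $v=v_1(1,\xi,\dots,\xi^{n-1})^\top$. Since $v\neq 0$, we must have $v_1\neq 0$, and we put $\mu=v_1$. Using the relation $u_iv_1=u_{i+1}v_2=\xi u_{i+1}v_1$ and dividing by $v_1$ gives $u_i=\xi u_{i+1}$. Descending from $u_n=1$ yields $u_i=\xi^{n-i}$, which is exactly the form $A_2$.

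Case $u_n=0$. The relation with $i=n-1$ gives $u_{n-1}v_j=0$ for $j\le n-1$. Let $k$ be the largest index with $u_k\neq 0$, so $k<n$. Then $u_kv_j=u_{k+1}v_{j+1}=0$ for $j\le n-1$, hence $v_1=\dots=v_{n-1}=0$. Since $v\neq0$, it follows that $v_n\neq0$, and so $A$ is supported in its last column. But the last column of a Toeplitz matrix holds $a_{-(n-1)},\dots,a_0$, and the diagonals $a_{-(n-2)},\dots,a_0$ also appear in earlier columns, which are zero. Therefore only $a_{-(n-1)}$, the $(1,n)$ entry, can be nonzero, and this gives the form $A_1$.

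The only point needing care is the degenerate case $u_n=0$, where the geometric-progression argument breaks down. The reduction above to "only the last column is nonzero" handles it cleanly. Over $\IR$ all the quantities involved stay real, so the argument is field-independent.
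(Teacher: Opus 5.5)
Your proof is correct and follows essentially the paper's route. You write $A$ as an outer product and split on whether a corner entry of one factor vanishes: you test the last entry $u_n$ of the column factor, while the paper tests the first entry $c_1$ of the row factor, a mirror-image choice. The shift relations then give geometric progressions in the nondegenerate case, and the degenerate case is forced to $A=\mu\mathbf{e}_1\mathbf{e}_n^\top$.

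If anything, your derivation of $v$ from $u_{n-1}v_j=v_{j+1}$ is slightly cleaner than the paper's appeal to equal diagonal entries $\xi^{n-1}c_1=\cdots=c_n$. That chain does not by itself force $c_3=\cdots=c_{n-1}=0$ when $\xi=0$.
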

\begin{proof}
The ``if" part is obvious. For the ``only if" part, take a Toeplitz rank-one matrix in the form
\begin{equation}
A={\bf b}^\top {\bf c}, \; \; \mbox{where}\; \;  {\bf b}=\begin{bmatrix}b_1 & \ldots & b_n\end{bmatrix}\neq 0,
\quad {\bf c}=\begin{bmatrix}c_1 & \ldots & c_n\end{bmatrix}\neq 0.
\label{2}
\end{equation}
If $c_1=0$, let $c_k$ be the leftmost nonzero entry in ${\bf c}$. Then the $k-1$ leftmost columns in $A$ are zeros. Then the 
Toeplitz structure of $A$ guarantees that $b_ic_k=0$ for $i=2,\ldots,n$, 
and hence, $b_i=0$ for $i\ge 2$. Thus, ${\bf b}=b_1{\bf e}_1^\top$.  
Subsequently, only the top row in $A$ is non-zero, which is possible (again by the Toeplitz structure of $A$) only if $k=n$, i.e., if 
$c_n$ is the only non-zero entry in ${\bf c}$. Thus, ${\bf c}=c_n{\bf e}_n^\top$ which together with 
\eqref{2}, leads us to $A=A_1$ with $\mu=b_1c_n$.

\smallskip

If $c_1\neq 0$, we let $\xi:=\frac{c_2}{c_1}$.
Since $A$ is a Toeplitz matrix,  we conclude from equalities $a_{j,1}=a_{j+1,2}$ 
that $b_j=b_{j+1}\xi$ for $j=1,\ldots,n-1$. Hence, 
$$
{\bf b}=b_n\begin{bmatrix} \xi^{n-1} & \xi^{n-2}&\ldots \xi&1\end{bmatrix}.
$$
Since ${\bf b}\neq 0$, we have $b_n\neq 0$. Since the diagonal entries in $A$ 
are equal, we have
$$
\xi^{n-1}c_1=\xi^{n-2}c_2=\ldots=\xi c_{n-1}=c_n,
$$
which implies ${\bf c}=c_1\begin{bmatrix} 1& \xi &\ldots \xi^{n-2} & \xi^{n-1}\end{bmatrix}$. Substituting ${\bf b}$ and 
${\bf c}$ as above into \eqref{2} we conclude that $A=A_2$ with $\mu=b_nc_1$.
\end{proof}

It is easy to see that with respect to the basis $\cB$ defined in 
(\ref{basis}), the coordinate vectors of rank one Toeplitz matrices in $\IF^{2n-1}$ have the form 
\begin{equation}
\label{h0}
\mu [0, \dots, 0, 1]^\top \quad \hbox{ or } \quad \mu [1, \xi, \dots, \dots, \xi^{2n-2}]^\top
\end{equation}
for some $\mu, \xi \in \IF$ with $\mu \ne 0$. Note also that 
$$
[0,\dots, 0,1] = \lim_{\xi \rightarrow \infty} \frac{1}{\xi^{2n-2}}[1, \dots, \xi^{2n-2}].$$ 
So, the first vector in \eqref{h0} can be viewed as the limit of a vector of the second type.  
In the following discussion, we use the notation
\begin{equation}
\label{h} 
h(\infty) = [0, \dots, 0, 1]^\top \quad \hbox{ and} \quad
 h(\xi) =  [1, \xi, \dots, \xi^{2n-2}]^\top \ \hbox{ with } \xi \in \IF.
\end{equation}
Moreover, we let $\mathcal R$ be the set of all non-zero multiples of vectors \eqref{h}:
\begin{equation}
\mathcal R= 
\{ \mu h(\xi): \mu \in \IF \setminus \{0\}, \  \xi \in \IF \cup \{\infty  \}\}\label{calr}
\end{equation}
It is easy to verify that a nonzero vector 
$h=\begin{bmatrix} h_1&\ldots&h_{2n-1}\end{bmatrix}^\top$ lies in  
$\mathcal R$ if and only if
\begin{equation} \label{calr-2}
h_j^2=h_{j-1}h_{j+1} \quad  \hbox{ for } \quad   j=2,\ldots,2n-2.
\end{equation}
We have the following.

\begin{proposition} 
\label{R:1.3} 
Let $k\le 2n-1$. A set of $k$ nonzero vectors in $\cR$ is linearly dependent 
if and only if two of the $k$ vectors are multiples
of each other.
\end{proposition}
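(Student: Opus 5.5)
The ``if'' direction is immediate: if two of the $k$ vectors are multiples of each other, a nontrivial linear combination of just those two vanishes, so the whole set is linearly dependent. For the ``only if'' direction we argue by contraposition. Suppose the $k$ vectors are pairwise non-proportional. We will show they are linearly independent.

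Each vector in $\cR$ has the form $\mu h(\xi)$ with $\mu\neq 0$ and $\xi\in\IF\cup\{\infty\}$. Two such vectors $\mu h(\xi)$ and $\nu h(\eta)$ are proportional exactly when $\xi=\eta$. To check this, compare first coordinates: these are $0$ for $h(\infty)$ and $1$ otherwise. When both points are finite, the second coordinates then force $\xi=\eta$. Hence pairwise non-proportionality means the vectors are $\mu_j h(\xi_j)$, $j=1,\ldots,k$, with distinct $\xi_j\in\IF\cup\{\infty\}$. Since the scalars $\mu_j$ are nonzero, it suffices to show that $h(\xi_1),\ldots,h(\xi_k)$ are linearly independent.

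First suppose all $\xi_j$ are finite. Then $h(\xi_j)$ is the $j$-th column of the $(2n-1)\times k$ Vandermonde matrix $V=[\xi_j^{\,i-1}]$. Since $k\le 2n-1$, the top $k\times k$ block of $V$ is a square Vandermonde matrix. Its determinant is $\prod_{i<j}(\xi_j-\xi_i)\neq 0$, so $V$ has full column rank $k$.

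Now suppose one of the points, say $\xi_k$, equals $\infty$; only one can, since the points are distinct. Assume $\sum_j c_j h(\xi_j)=0$. The first $2n-2$ coordinates of $h(\infty)$ vanish. So, restricted to those coordinates, we get $\sum_{j<k} c_j [1,\xi_j,\ldots,\xi_j^{2n-3}]^\top=0$. There are $k-1\le 2n-2$ distinct finite nodes here, so the same Vandermonde argument gives $c_1=\cdots=c_{k-1}=0$. The relation then reduces to $c_k h(\infty)=0$, which forces $c_k=0$.

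The only point needing care is this point at infinity. It is why the hypothesis $k\le 2n-1$ is used with one less row in that case, and the bound $k-1\le 2n-2$ is exactly what is available there. Alternatively, one could homogenize, replacing $h(\xi)$ by $[\beta^{2n-2},\alpha\beta^{2n-3},\ldots,\alpha^{2n-2}]^\top$ for projective points $[\beta:\alpha]$, and use the homogeneous Vandermonde determinant $\prod_{i<j}(\alpha_j\beta_i-\alpha_i\beta_j)$. That handles both cases uniformly.
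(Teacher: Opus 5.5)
Your proof is correct and follows the paper's argument. When all $\xi_j$ are finite you use a top $k\times k$ Vandermonde minor, and when $h(\infty)$ occurs you use the block-triangular structure forced by the vanishing of its first $2n-2$ coordinates. The paper expresses this second case through the $k\times k$ minor on rows $1,\dots,k-1,2n-1$, whose determinant equals a $(k-1)\times(k-1)$ Vandermonde determinant; you instead kill $c_1,\dots,c_{k-1}$ first and then $c_k$, which is the same argument. Your homogenized aside would need the set padded to $2n-1$ distinct points when $k<2n-1$ before the square determinant formula applies, but the main proof does not rely on it.
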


\begin{proof} The sufficiency is clear. For the contrapositive proof of necessity,  
assume that no two vectors are multiples of each other.
By normalization, we may assume all given $k$ vectors are of the form $h(\xi)$. We use
these vectors to construct the $(2n-1)\times k$ matrix
$$
R = \begin{bmatrix} h(\xi_1) & \ldots & h(\xi_{k})\end{bmatrix},\quad \xi_i\in\mathbb F\cup\{\infty\},
$$
in which no two columns are identical.

\smallskip

If $\xi_i\neq\infty$ for $i=1,\ldots,k$, then $R$ is a $(2n-1)\times k$ Vandermonde matrix.
The matrix $R_1$ obtained by removing the last $2n-k-1$ rows from $R$
is a square Vandermonde matrix with determinant $\prod_{1 \le i < j \le k} (\xi_i-\xi_j) \ne 0$.
Hence, $R_1$ has rank $k$, and so does $R$. Thus, the columns in $R$ are linearly independent.

\smallskip

If the column $h(\infty)$ appears in $R$, we may assume that it is the rightmost column.
Let $R_1$ be the matrix obtained by removing the rows with indices $k, \dots, 2n-2$.
Then the last column of  $R_1$ equals $[0, \dots, 0, 1]^\top\in \IF^k$ and the leading $(k-1)\times (k-1)$ submatrix $R_2$
is again a Vandermonde matrix with determinant $\prod_{1 \le i < j \le k-1} (\xi_i-\xi_j) \ne 0$.
Since the last column of $R_1$ is $[0, \dots, 0, 1]^\top$, we see that $\det(R_1) = \det(R_2)$. Hence, $R_1$ has rank $k$, and so does
$R$. Thus, the columns in $R$ are linearly independent.
\end{proof}

\section{Rank one preservers}

In this section, we determine the structure of linear maps $T:{\cT}_n \to {\cT}_n$ sending rank one 
matrices to rank one matrices. Recall that 
$$\cB = \left\{Z_n^{n-1}, \; Z_n^{n-2}, \; \ldots, \; Z_n, \; I_n, \;  Z_n^{\top},  \, \ldots, \,  
(Z^{\top}_n)^{n-2}, \, (Z^{\top}_n)^{n-1}\right\}$$
and 
$$\cR = \{[0, \dots, 0, \mu]^\top: \mu \in \IF \setminus \{0\}\}
\cup \{ \mu [1, \xi, \dots, \xi^{2n-2}]^\top: \mu \in \IF \setminus \{0\}, \  \xi \in \IF \}.
$$
Let $L=\left[T\right]_{\cB}\in M_{2n-1}$ be the matrix of a linear map $T: \cT_n \to \cT_n$ with respect to the basis $\cB$.
By Proposition \ref{P:1.1}, $T$ preserves rank one matrices if and only if $Lh\in\mathcal R$ for all $h\in\mathcal R$. 
With a slight abuse of notation, we will write $L(\cR) \subseteq \cR$ to denote the latter invariance property. Also, if 
$h \in \IF^{2n-1}$, we will not distinguish $L(h)$ and $Lh$. Furthermore, 
we will use notation $F_n$ and $D_n(r)$ for the anti-diagonal permutation matrix and the diagonal matrix given by
\begin{equation}
F_n=\begin{bmatrix}0&\ldots&0&1\\
\vdots&\revddots&\revddots&0\\
0&1&\revddots& \vdots\\ 1&0&\ldots&0\end{bmatrix},\quad 
D_n(r)=\begin{bmatrix} 1 &0&\ldots &0\\ 0&r&\ddots&\vdots\\ \vdots&\ddots&\ddots& 0\\
0&\ldots&0&r^{n-1}\end{bmatrix}.
\label{a}
\end{equation}
\begin{lemma} 
\label{lem0} 
The set of matrices $L \in M_{2n-1}$ that satisfy $L(\cR) \subseteq \cR$ 
is closed under multiplication and contains $F_{2n-1}$, $r I_{2n-1}$
and  $D_{2n-1}(r)$ for any nonzero $r \in \IF$.
\end{lemma}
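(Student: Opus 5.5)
Closure under multiplication is immediate: if $L_1(\cR)\subseteq\cR$ and $L_2(\cR)\subseteq\cR$, then for every $h\in\cR$ we have $L_2h\in\cR$, hence $L_1L_2h=L_1(L_2h)\in\cR$. So the whole statement reduces to checking that each of the three listed matrices maps every element of $\cR$ into $\cR$. We check this on the two types of generators $\mu h(\xi)$ with $\xi\in\IF$ and $\mu h(\infty)$, using the explicit description \eqref{calr} and, where convenient, the quadratic characterization \eqref{calr-2}.

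For $rI_{2n-1}$ with $r\neq 0$ the claim is trivial, since $r\mu h(\xi)$ is again a nonzero multiple of $h(\xi)$. For $D_{2n-1}(r)$ we compute coordinatewise: $D_{2n-1}(r)h(\xi)=[1,r\xi,\ldots,(r\xi)^{2n-2}]^\top=h(r\xi)$ for $\xi\in\IF$. Also $D_{2n-1}(r)h(\infty)=r^{2n-2}h(\infty)$, which lies in $\cR$ because $r\neq 0$. Alternatively, one can observe that $D_{2n-1}(r)$ preserves the relations $h_j^2=h_{j-1}h_{j+1}$: both sides get multiplied by $r^{2j-2}$. It also preserves nonvanishing, since it is invertible.

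For the flip $F_{2n-1}$ we reverse coordinates. If $\xi\neq 0$, then $F_{2n-1}h(\xi)=[\xi^{2n-2},\ldots,\xi,1]^\top=\xi^{2n-2}h(1/\xi)$, a nonzero multiple of an element of the form $h(\cdot)$. If $\xi=0$, then $F_{2n-1}h(0)=h(\infty)$, and $F_{2n-1}h(\infty)=h(0)$. Hence $F_{2n-1}$ acts on the parameter by $\xi\mapsto 1/\xi$ on $\IF\cup\{\infty\}$. The quicker route is to note that \eqref{calr-2} is symmetric under $j\mapsto 2n-j$, so reversing coordinates preserves it, and $F_{2n-1}$ sends nonzero vectors to nonzero vectors.

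There is no real obstacle here. The only point requiring care is the bookkeeping of the exceptional vector $h(\infty)$ and the case $\xi=0$ under $F_{2n-1}$. Using the criterion \eqref{calr-2} handles all cases uniformly, so I would present the argument that way, with the explicit parameter maps $\xi\mapsto r\xi$ and $\xi\mapsto 1/\xi$ included as a remark.
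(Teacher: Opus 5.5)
Your explicit computations are correct and are exactly the paper's proof: $D_{2n-1}(r)h(\xi)=h(r\xi)$, $D_{2n-1}(r)h(\infty)=r^{2n-2}h(\infty)$, $F_{2n-1}h(\xi)=\xi^{2n-2}h(1/\xi)$ for $\xi\neq 0$, and $F_{2n-1}h(0)=h(\infty)$, $F_{2n-1}h(\infty)=h(0)$. Together with the trivial case $rI_{2n-1}$ and closure under products, writing those up finishes the lemma.

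The ``uniform'' route you say you would actually present has a gap. It needs the \emph{sufficiency} half of \eqref{calr-2}: that a nonzero $h$ with $h_j^2=h_{j-1}h_{j+1}$ for $2\le j\le 2n-2$ lies in $\cR$. That half is false once $n\ge 3$. Take $h=[1,0,\ldots,0,1]^\top\in\IF^{2n-1}$. It is nonzero, and it satisfies every one of these relations: for each such $j$ we have $h_j=0$, and at least one of $h_{j-1},h_{j+1}$ is also an interior coordinate, hence zero. Yet $h$ is not a multiple of $h(\infty)$, because its first entry is nonzero. Nor is it a multiple of any $h(\xi)$, because that would force $\xi=0$ and then a zero last entry. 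The consecutive relations lose all information as soon as an interior coordinate vanishes. So ``$L$ preserves the relations and preserves nonvanishing'' does not yield $L(\cR)\subseteq\cR$.

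Although \eqref{calr-2} is stated as an equivalence, only its necessity half holds for $n\ge 3$, and only that half is used later in the paper. So make the parameter maps $\xi\mapsto r\xi$ and $\xi\mapsto 1/\xi$ the proof itself, not a remark.

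If you want a uniform membership test, use the correct one. A nonzero $h$ lies in $\cR$ if and only if the $2\times(2n-2)$ matrix with rows $[h_1,\ldots,h_{2n-2}]$ and $[h_2,\ldots,h_{2n-1}]$ has rank one. Equivalently, $h_ih_{j+1}=h_{i+1}h_j$ for all $1\le i,j\le 2n-2$. Under $D_{2n-1}(r)$ both sides of each relation are multiplied by $r^{i+j-1}$. Under $F_{2n-1}$ the relation indexed by $(i,j)$ becomes, up to sign, the one indexed by $(2n-1-i,\,2n-1-j)$. With this test your argument goes through.
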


\begin{proof} The first statement is clear.

If $L = F_{2n-1}$, then for any nonzero $\xi\in \IF$,
$L(h(\xi)) = \mu h(y)$ with $\mu=\xi^{2n-2}$ and $y=1/\xi$.
Moreover, $L(h(\infty)) = h(0)$ and $L(h(0)) = h(\infty)$.

\smallskip

Suppose $r \in \IF$ is nonzero. If $L = rI_{2n-1}$, then $L(\cR) = \cR$.
If $L  = D_{2n-1}(r)$, 
then for any $\xi \in \IF$, $L(h(\xi)) = h(y)$ with $y = r\xi$.
In addition, $L(h(\infty)) = r^{2n-2} h(\infty)$.
In all cases, we see that  $L(\cR) \subseteq \cR$.
\end{proof}
Note that it is essential to assume that $r \in \IF$ 
is nonzero in the above lemma.
If $r = 0$, then $L(h(\infty)) = 0 \notin \cR$ for $L = rI_{2n-1}$ or $L = D_{2n-1}(r)$.

\smallskip

We will identify more matrices $L \in M_{2n-1}$ satisfying $L(\cR) \subseteq \cR$.
For $\alpha \in \IF$, let 
\begin{equation}
V_n(\alpha):=\begin{bmatrix}1&\alpha & \alpha^2 &\ldots& \alpha^{n-1}\\
0 & 1 & 2\alpha  &\ldots & \spm{n-1\\ 1}\alpha^{n-2}\\
0&0&1&\ldots& \spm{n-1\\ 2}\alpha^{n-3}\\
\vdots & \ddots & \ddots&\ddots &\vdots \\
0 &\ldots & 0 &\ldots&  1 \end{bmatrix},
\label{3}
\end{equation}
which is known as the confluent Vandermonde matrix associated 
with $\alpha$. The study of confluent Vandermonde matrices is
connected to many different areas; e.g., see \cite{bconf,LiLin} and references therein.
Denote by
\begin{equation} \label{J-alpha}
\mathcal J_\alpha=\alpha I_n+Z_n = 
\begin{bmatrix} 
\alpha & & & \cr
  1 & \alpha & & \cr
  & \ddots & \ddots & \cr
  & & 1 & \alpha\cr\end{bmatrix}
  \end{equation} 
the lower triangular Jordan block
with $\alpha$ on the main diagonal. Then we can verify that the 
$j$-th column of $V_n(\alpha)$ is equal to $\mathcal J_\alpha^{j-1}{\bf e}_1$, i.e., 
\begin{equation}
V_n(\alpha)=\begin{bmatrix}{\bf e}_1 & \mathcal J_\alpha{\bf e}_1&\ldots & \mathcal J_\alpha^{n-1}{\bf e}_1\end{bmatrix}.
\label{4}
\end{equation}

The next statement presents a characteristic property of $V_n(\alpha)$.

\begin{lemma}
\label{L:2.1}
Let $\alpha \in \IF$. The matrix \eqref{3} is the unique matrix that satisfies the identity
\begin{equation}
\begin{bmatrix}1&x&\ldots&x^{n-1}\end{bmatrix}V_n(\alpha)
=\begin{bmatrix}1&x+\alpha&\ldots&(x+\alpha)^{n-1}\end{bmatrix}\quad\mbox{for all} \;\; x.
\label{6}
\end{equation}
In particular, the matrix 
$L = V_{2n-1}(\alpha)^\top\in M_{2n-1}$ satisfies $L(\cR) \subseteq \cR$ .
\end{lemma}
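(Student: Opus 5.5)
The plan has three steps: verify identity \eqref{6} column by column, prove uniqueness from polynomial identities, and then deduce $L(\cR)\subseteq\cR$ by transposing.

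\emph{Existence.} By \eqref{3}, the $(i,j)$ entry of $V_n(\alpha)$ (indices starting at $1$) is $\binom{j-1}{i-1}\alpha^{j-i}$, with the convention that it vanishes for $i>j$. Hence the $j$-th entry of the row vector on the left of \eqref{6} is
$$\sum_{i=1}^{j}\binom{j-1}{i-1}x^{i-1}\alpha^{j-i}=(x+\alpha)^{j-1},$$
by the binomial theorem, which is exactly the right-hand side. Alternatively one can argue from \eqref{4}: the row vector $[1\ x\ \ldots\ x^{n-1}]$ applied to $\mathcal J_\alpha^{j-1}{\bf e}_1$ produces $(x+\alpha)^{j-1}$, since pairing with the powers of $x$ turns the lower shift $Z_n$ into multiplication by $x$. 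I will take the direct binomial computation as the proof.

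\emph{Uniqueness.} Suppose $V$ is any $n\times n$ matrix satisfying \eqref{6} for all $x\in\IF$. Then $[1\ x\ \ldots\ x^{n-1}](V-V_n(\alpha))=0$ for all $x$. Consequently each column of $V-V_n(\alpha)$ gives the coefficients of a polynomial of degree at most $n-1$ that vanishes on the infinite field $\IF$. Such a polynomial is zero, so $V=V_n(\alpha)$. If one prefers a finite argument, choose $n$ distinct values of $x$; the resulting square Vandermonde matrix is invertible.

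\emph{Invariance of $\cR$.} Apply \eqref{6} with $n$ replaced by $2n-1$ and transpose. This gives $V_{2n-1}(\alpha)^\top h(\xi)=h(\xi+\alpha)$ for every $\xi\in\IF$, so nonzero multiples of $h(\xi)$ are sent into $\cR$. It remains to handle $h(\infty)={\bf e}_{2n-1}$. We have $L{\bf e}_{2n-1}$ equal to the last row of $V_{2n-1}(\alpha)$, read as a column. Since $V_{2n-1}(\alpha)$ is upper triangular with unit diagonal, its last row is $[0,\ldots,0,1]$, so $L h(\infty)=h(\infty)$. Linearity then gives $L(\mu h)=\mu Lh\in\cR$ for every $\mu\neq0$.

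The only point needing care is this $\infty$ case, and it is immediate from the triangular structure of $V_{2n-1}(\alpha)$.
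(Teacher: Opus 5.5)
Your proof is correct and follows essentially the same route as the paper. Existence comes from the binomial theorem: the paper phrases it through the column description \eqref{4}, whereas you read off the entries $\binom{j-1}{i-1}\alpha^{j-i}$ directly. Uniqueness comes from comparing polynomial coefficients, and invariance of $\cR$ from $Lh(\xi)=h(\xi+\alpha)$ together with $Lh(\infty)=h(\infty)$; your unit-upper-triangular argument for the $h(\infty)$ case spells out what the paper simply calls clear.
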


\begin{proof}
By \eqref{4}, the equality \eqref{6} is equivalent to equalities
\begin{equation}
\begin{bmatrix}1&x&\ldots&x^{n-1}\end{bmatrix}\mathcal J_\alpha^j{\bf e}_1 =(x+\alpha)^j
\quad \mbox{for}\quad j=0,\ldots, n-1,
\label{6a}
\end{equation}
which are immediate by \eqref{3} and the Binomial Theorem. On the other hand, equalities \eqref{6a} uniquely determine
the entries in $\mathcal J_\alpha^j{\bf e}_1$ as the coefficients of $x^k$ in the expansion of $(x+\alpha)^j$. 

\smallskip

Clearly, if $L: = V_{2n-1}(\alpha)^\top$, then 
$L(h(\infty)) = h(\infty)$, and for any $\xi \in \IF$,
$Lh(\xi)=h(y)\in \cR$ with $y = \xi+\alpha$. So, $L(\cR) \subseteq \cR$.
\end{proof}
\noindent
Next, we introduce another type of matrices which  preserve $\cR$.

\begin{lemma} 
\label{L:2.2}
Let   
$\mathcal J_\alpha=\alpha I_n+Z_n$ and $\mathcal J_\beta=\beta I_n+Z_n$ with $\alpha, \beta \in \IF$. 
Then 
\begin{equation}
\begin{bmatrix}1&x&\ldots&x^{n-1}\end{bmatrix}\mathcal J_\beta^{k}\mathcal J_\alpha^{\ell}{\bf e}_1
=(x+\beta)^{k}(x+\alpha)^{\ell},\quad \mbox{whenever} \; \; k+\ell<n.
\label{6b}
\end{equation}
Let
\begin{equation}
W_n(\alpha,\beta)=
\begin{bmatrix}\mathcal J_\beta^{n-1}{\bf e}_1 & \mathcal J_\beta^{n-2}\mathcal J_\alpha{\bf e}_1&\ldots & 
\mathcal J_\beta\mathcal J_\alpha^{n-2}{\bf e}_1 &\mathcal J_\alpha^{n-1}{\bf e}_1\end{bmatrix}.
\label{7}
\end{equation}
Then $W_n(\alpha,\beta)$ is the unique matrix satisfying the identity
\begin{eqnarray}
&&\begin{bmatrix}1&x&\ldots&x^{n-1}\end{bmatrix}W(\alpha,\beta)\nonumber\\
&&=\begin{bmatrix}(x+\beta)^{n-1}&(x+\beta)^{n-2}(x+\alpha)&\ldots&(x+\alpha)^{n-1}\end{bmatrix}
\label{9}
\end{eqnarray}
for all $x$. Furthermore, $\det W_n(\alpha,\beta)=(\beta-\alpha)^\frac{n(n-1)}{2}$.
Finally, if $\alpha \ne \beta$, then $L = W_{2n-1}(\alpha,\beta)^\top$ is invertible 
and satisfies $L(\cR) \subseteq \cR$.
\end{lemma}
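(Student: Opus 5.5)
Identity \eqref{6b} comes first, and everything else follows from it. The polynomial picture does the work: identify ${\bf v}\in\IF^n$ with the polynomial $p_{\bf v}(x)=[1\ x\ \ldots\ x^{n-1}]{\bf v}$ of degree $<n$. In this picture ${\bf e}_1$ corresponds to $1$.

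Multiplication by $Z_n$ shifts coordinates down. So if $\deg p_{\bf v}<n-1$, then $p_{Z_n{\bf v}}=x\,p_{\bf v}$, and hence $p_{\mathcal J_\gamma {\bf v}}=(x+\gamma)p_{\bf v}$. Applying this repeatedly gives \eqref{6b}. The products start at ${\bf e}_1$ and each step raises the degree by one, so truncation never occurs while $k+\ell<n$. Since $\mathcal J_\alpha$ and $\mathcal J_\beta$ commute, the order of the factors is irrelevant.

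Taking $k+\ell=n-1$ column by column gives \eqref{9}. Uniqueness is the same argument as in Lemma \ref{L:2.1}. An identity in $x$ valid for all $x$ (infinite field) determines the coefficients of each column's polynomial. Equivalently, the row vector $[1\ x\ \ldots\ x^{n-1}]$ over $n$ distinct values of $x$ forms an invertible Vandermonde matrix.

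For the determinant, I would translate $x\mapsto x-\alpha$ using Lemma \ref{L:2.1}. We have $W_n(\alpha,\beta)=V_n(\alpha)W_n(0,\beta-\alpha)$. To verify this, multiply both sides on the left by $[1\ x\ \ldots]$. The left side gives the polynomials $(x+\beta)^{n-1-j}(x+\alpha)^j$. The right side gives $[1\ (x+\alpha)\ \ldots]W_n(0,\beta-\alpha)$, which is the same by \eqref{9} evaluated at $x+\alpha$. Uniqueness then gives equality.

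Since $\det V_n(\alpha)=1$ (upper unitriangular), it suffices to compute $\det W_n(0,\gamma)$ with $\gamma=\beta-\alpha$. The columns of $W_n(0,\gamma)$ are the coefficient vectors of $x^j(x+\gamma)^{n-1-j}$ for $j=0,\ldots,n-1$. Column $j$ has zero entries above row $j$, and its $(j,j)$ entry is $\gamma^{n-1-j}$. Hence the matrix is lower triangular with diagonal entries $\gamma^{n-1-j}$, and the determinant is $\prod_j\gamma^{n-1-j}=\gamma^{n(n-1)/2}$.

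For the last claim, replace $n$ by $2n-1$. If $\alpha\ne\beta$ the determinant is nonzero, so $L=W_{2n-1}(\alpha,\beta)^\top$ is invertible. For $\xi\in\IF$, transposing \eqref{9} at $x=\xi$ gives
$$Lh(\xi)=\big[(\xi+\beta)^{2n-2-j}(\xi+\alpha)^j\big]_{j=0}^{2n-2}.$$
If $\xi+\beta\neq0$, this equals $(\xi+\beta)^{2n-2}h\!\left(\frac{\xi+\alpha}{\xi+\beta}\right)$. If $\xi=-\beta$, it equals $(\alpha-\beta)^{2n-2}h(\infty)$. Both are nonzero multiples, so they lie in $\cR$.

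For $h(\infty)$, the last column of $W$ is $\mathcal J_\alpha^{2n-2}{\bf e}_1$, which has first entry $\alpha^{2n-2}$. More simply, $Lh(\infty)$ is the last row of $W$ transposed. The last row collects the leading coefficients of the column polynomials, all of which are $1$, so $Lh(\infty)=h(1)\in\cR$.

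The main obstacle is the determinant. The factorization through $V_n(\alpha)$ and the triangular structure of $W_n(0,\gamma)$ is the step needing care, especially checking that the triangularity goes the right way. The case $\xi=-\beta$ in the last claim must also not be overlooked.
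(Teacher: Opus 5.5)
Your proof is correct and is essentially the paper's: it uses the same uniqueness argument, the same factorization $W_n(\alpha,\beta)=V_n(\alpha)W_n(0,\beta-\alpha)$ with $W_n(0,\gamma)$ lower triangular, and the same case check of $Lh(\xi)$; the one local difference is that you get \eqref{6b} directly from $\mathcal J_\gamma$ acting as multiplication by $x+\gamma$ on polynomials of degree $<n-1$, while the paper expands $\mathcal J_\beta=(\beta-\alpha)I_n+\mathcal J_\alpha$ binomially and invokes \eqref{6a}. Your values $Lh(\xi)=(\xi+\beta)^{2n-2}h\bigl((\xi+\alpha)/(\xi+\beta)\bigr)$, $Lh(-\beta)=(\alpha-\beta)^{2n-2}h(\infty)$ and $Lh(\infty)=h(1)$ are the right ones, and the paper's $(x+\beta)^{n-1}$, $(x+\alpha)^{n+1}$ and $h(\alpha)$ at that point are slips that do not affect the conclusion.
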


\begin{proof} 
We derive (\ref{6b}) from \eqref{6a}. 
Note that $\mathcal J_\beta=(\beta-\alpha)I_n+\mathcal J_\alpha$ and hence,
$$
\mathcal J_\beta^k=\sum_{j=0}^k \spm{k \\ j}(\beta-\alpha)^{k-j}\mathcal J_\alpha^j.
$$
Making use of the latter relation and \eqref{6a}, we get
\begin{eqnarray*}
&&\begin{bmatrix}1&x&\ldots&x^{n-1}\end{bmatrix}\mathcal J_\beta^{k}\mathcal J_\alpha^{\ell}{\bf e}_1\\
&&={\displaystyle\sum_{j=0}^k\spm{k \\ j}(\beta-\alpha)^{k-j}\begin{bmatrix}1&x&\ldots&x^{n-1}\end{bmatrix}\mathcal J_\alpha^{j+\ell}{\bf e}_1}\\
&&={\displaystyle\sum_{j=0}^k\spm{k \\ j}(\beta-\alpha)^{k-j}(x+\alpha)^{j+\ell}}\\
&&=(x+\alpha)^{\ell}\left(x+\alpha+\beta-\alpha\right)^k=(x+\beta)^{k}(x+\alpha)^{\ell}.
\end{eqnarray*}
Thus, (\ref{6b}) holds. Equality \eqref{9} is equivalent to equalities 
$$
\begin{bmatrix}1&x&\ldots&x^{n-1}\end{bmatrix}\mathcal J_\beta^{n-j}\mathcal J_\alpha^{j-1}{\bf e}_1
=(x+\beta)^{n-j}(x+\alpha)^{j-1}\quad \mbox{for} \; \;
j=1,\ldots,n, 
$$
which hold true by (\ref{6b}) and uniquely determine 
the entries of the columns $\mathcal J_\beta^{n-j}\mathcal J_\alpha^{j-1}{\bf e}_1$ in $W_n(\alpha,\beta)$.

\smallskip

It is clear from \eqref{3} that $\det V_n(\alpha)=1$. 
To evaluate the determinant of $W_n(\alpha,\beta)$, we first observe from \eqref{7} that 
$$
W_n(0,\beta)=\begin{bmatrix}\mathcal J_\beta^{n-1}{\bf e}_1 & Z_n\mathcal J_\beta^{n-2}{\bf e}_1&\ldots &
Z_n^{n-2}\mathcal J_\beta{\bf e}_1 &Z_n^{n-1}{\bf e}_1\end{bmatrix}
$$
is a lower triangular matrix with diagonal entries
$$
\left[W_n(0,\beta)\right]_{jj}={\bf e}_j^\top Z^{j-1}_n\mathcal J_\beta^{n-j}{\bf e}_1={\bf e}_1^\top\mathcal J_\beta^{n-j}{\bf e}_1=\beta^{n-j},
\quad j=1,\ldots n,
$$
and therefore, $\det W_n(0,\beta)=\beta^\frac{n(n-1)}{2}$. By \eqref{9}, we have
$$
\begin{bmatrix}1&x&\ldots&x^{n-1}\end{bmatrix}W_n(0,\beta)
=\begin{bmatrix}(x+\beta)^{n-1}&(x+\beta)^{n-2}x&\ldots&x^{n-1}\end{bmatrix}.
$$
Replacing $x$ and $\beta$ in the latter identity by respectively, $x+\alpha$ and $\beta-\alpha$, gives
\begin{eqnarray*}
&&\begin{bmatrix}1&x+\alpha&\ldots&(x+\alpha)^{n-1}\end{bmatrix}W(0,\beta-\alpha)\\
&&=\begin{bmatrix}(x+\beta)^{n-1}&(x+\beta)^{n-2}(x+\alpha)&\ldots&(x+\alpha)^{n-1}\end{bmatrix},
\end{eqnarray*}
which being combined with \eqref{6} and \eqref{9}leads us to 
$$
\begin{bmatrix}1&x&\ldots&x^{n-1}\end{bmatrix}V_n(\alpha)W_n(0,\beta-\alpha)=\begin{bmatrix}1&x&\ldots&x^{n-1}\end{bmatrix}W_n(\alpha,\beta).
$$
By the uniqueness property in \eqref{9}, 
$V_n(\alpha)W_n(0,\beta-\alpha)=W_n(\alpha,\beta)$ and hence,
\begin{eqnarray*}
\det W_n(\alpha,\beta)&=&\det V_n(\alpha)\cdot \det W(0,\beta-\alpha)\\
&=&\det W_n(0,\beta-\alpha)=(\beta-\alpha)^\frac{n(n-1)}{2},
\end{eqnarray*}
which completes the computation. 

\smallskip

Finally, let $L = W_{2n-1}(\alpha,\beta)^\top$. If $\xi \in \IF \setminus \{-\beta \}$, 
then 
$$
L(h(\xi)) = \mu h(y)\quad\mbox{with} \; \;  \mu = (x+\beta)^{n-1}, \; y = \frac{x+\alpha}{x+\beta}.
$$
Furthermore, 
$$
L(h(-\beta)) = (x+\alpha)^{n+1} h(\infty),\quad L(h(\infty)) = [1, \alpha, \dots, \alpha^{2n-1}]^\top = h(\alpha).
$$
Thus, $L = W_{2n-1}(\alpha,\beta)^\top$ preserves $\cR$.
\end{proof}

The next lemma shows that $L \in M_{2n-1}$ satisfying $L(\cR) \subseteq \cR$ is either invertible 
or a rank one matrix in the real case. Once it is proved,  we can present the main theorem
giving the complete description of matrices in $M_{2n-1}$ preserving $\cR$.

\begin{lemma} \label{lem2}
Let $L \in M_{2n-1}$ satisfy $L(\cR) \subseteq \cR$.
Then  either
\begin{enumerate}
\item[{\rm (a)}] $L$ is a invertible, or
\item[{\rm (b)}] $\IF = \IR$ and $L = h[c_1, \dots, c_{2n-1}]$
for some $h\in \cR$ and  $c_1, \dots, c_{2n-1} \in \IR$ such that 
$$c_1 + c_2 x + \dots + c_{2n-1}x^{2n-2} \ne 0  \hbox{ for any } x \in \IR.$$
\end{enumerate}
\end{lemma}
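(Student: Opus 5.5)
The plan is to study the images of the curve $\xi\mapsto h(\xi)$ and exploit the fact that their coordinates are polynomials in $\xi$ which satisfy the quadratic relations \eqref{calr-2}.

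\emph{Step 1: set-up.} Let $L$ have rows $\ell_1,\dots,\ell_{2n-1}$, and put $p_j(x)=\ell_j h(x)$. Each $p_j$ is a polynomial of degree at most $2n-2$ whose coefficient vector is $\ell_j$. Since $L(\cR)\subseteq\cR$, we know $Lh(x)\neq 0$ for every $x\in\IF$, and by \eqref{calr-2}
\[
p_j(x)^2=p_{j-1}(x)p_{j+1}(x)\quad\text{for all }x\in\IF,\ j=2,\dots,2n-2 .
\]
Because $\IF$ is infinite, these are identities in $\IF[x]$. In addition, $Lh(\infty)\in\cR$.

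\emph{Step 2: $\operatorname{rank}L\ge 2$ forces $L$ invertible.} Suppose $\operatorname{rank}L\geq2$ and $L$ is singular. Choose a nonzero $v\in\ker L$. The vectors $h(\xi)$, $\xi\in\IF\cup\{\infty\}$, span $\IF^{2n-1}$ (Proposition~\ref{R:1.3}), so $v$ can be written in terms of them. Take a minimal representation
\[
v=\sum_{i=1}^k c_i h(\xi_i)
\]
with distinct $\xi_i$ and all $c_i\neq0$; by Proposition~\ref{R:1.3} we may take $k\le 2n-1$. Then
\[
\sum_i c_i\,Lh(\xi_i)=0 ,
\]
where each $Lh(\xi_i)$ lies in $\cR$. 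Since $k\ge2$, Proposition~\ref{R:1.3} applied to these $k$ vectors gives two indices $i\neq i'$ with $Lh(\xi_i)$ and $Lh(\xi_{i'})$ proportional. So $L$ identifies two distinct points of the curve up to scaling. To turn this into a global collapse I would use the polynomial description. Write $Lh(x)=(p_1(x),\dots,p_{2n-1}(x))$. The relations of Step 1 say that, over $\IC$, the map $x\mapsto[Lh(x)]$ is the composition of a rational map $\phi$ with the moment curve: $Lh(x)=q(x)\,h(\phi(x))$ with $\phi=p_2/p_1$, or $Lh(x)=p_{2n-1}(x)h(\infty)$. Comparing degrees, $q\,\phi^{j}$ must be a polynomial of degree $\le 2n-2$ for every $j\le 2n-2$. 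Writing $\phi=a/b$ in lowest terms, this forces $q=c\,b^{2n-2}$ and $\max(\deg a,\deg b)\le1$. Hence $\phi$ is a Möbius transformation or a constant.

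If $\phi$ is a Möbius transformation, then $L$ coincides on the curve with the appropriate product of the matrices in Lemmas \ref{lem0}, \ref{L:2.1} and \ref{L:2.2}. Those matrices are invertible, and since the curve spans $\IF^{2n-1}$, $L$ is invertible.

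If $\phi$ is constant, then every $Lh(x)$ is a multiple of a single $h\in\cR$, so $\operatorname{rank}L=1$. Combining the two cases, $L$ is either invertible or of rank one. This degree/Möbius argument is the step I expect to be the main obstacle, because it must also handle the case where $\phi$ has its pole at a point of $\IF$ and the point $\infty$. I would do it over $\IC$ first and then note that for $\IF=\IR$ the polynomials $p_j$ are real.

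\emph{Step 3: rank one.} Suppose $L=h[c_1,\dots,c_{2n-1}]$. Then
\[
Lh(x)=\Big(\sum_j c_jx^{j-1}\Big)h .
\]
This vector lies in $\cR$ only if $h\in\cR$ (after scaling) and $c(x):=\sum_j c_jx^{j-1}\neq0$ for all $x\in\IF$. Over $\IC$ a polynomial with no zeros is a nonzero constant, so $c=c_1e_1^\top$. But then $Lh(\infty)=c_{2n-1}h=0\notin\cR$, a contradiction. Hence the rank-one case occurs only for $\IF=\IR$, which gives (b). There the condition $Lh(\infty)=c_{2n-1}h\in\cR$ is automatic: a nonvanishing real polynomial has even degree and a nonzero leading coefficient, and that leading coefficient is $c_{2n-1}$ provided the degree is exactly $2n-2$. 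I would check this last point, and if it is not automatic, record it as part of the nonvanishing condition interpreted at $\infty$.
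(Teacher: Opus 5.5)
Your argument is correct, but Step~2 takes a genuinely different route from the paper.

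\emph{The paper's route.} It never analyzes the shape of $x\mapsto Lh(x)$. Assuming $L$ singular, it takes $(2n-2)^2+1$ distinct points $\xi_j$ and observes that any $2n-1$ of the images $Lh(\xi_j)$ are dependent. By Proposition~\ref{R:1.3} they therefore contain a proportional pair, so there are at most $2n-2$ proportionality classes. By pigeonhole one class has at least $2n-1$ members. Since $L$ times an invertible Vandermonde matrix then has rank one, $\mathrm{rank}\,L=1$. The rank-one case is then closed exactly as in your Step~3.

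\emph{Your route.} You write $Lh(x)=q(x)h(\phi(x))$ and let the degree bound force one of two outcomes. Either $\phi$ is constant, giving rank one. Or $p_{j+1}=c\,a^jb^{2n-2-j}$ with $a,b$ coprime of degree at most one, so $L$ is a scalar times the $(2n-2)$-th symmetric power of an invertible $2\times 2$ matrix, hence invertible.

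\emph{What each buys.} Your route gives the dichotomy without ever assuming $L$ singular. It also already yields the structure of the invertible preservers, i.e.\ essentially the necessity half of Theorem~\ref{main1}. The paper's pigeonhole argument is shorter and purely linear-algebraic, but it gives only ``invertible or rank one.'' The structure is left to the separate root-multiplicity argument in Theorem~\ref{main1}. In particular, the first half of your Step~2 (the kernel vector and the pair of proportional images) is never used and can be deleted.

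Two small repairs are needed.

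First, derive the parametrization $Lh(x)=p_1(x)h(p_2(x)/p_1(x))$ from $Lh(x)\in\cR$ at the infinitely many $x$ with $p_1(x)\neq 0$. The three-term relations alone do not force it when $p_2\equiv 0$: for $n=3$, $(1,0,0,0,1)^\top$ satisfies all of them but is not in $\cR$.

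Second, $c_{2n-1}\neq 0$ is not automatic in the real case. A nonzero constant $c$ has no real zeros but gives $Lh(\infty)=0$. This condition is not needed for (b) as stated; the paper's proof extracts it from $Lh(\infty)\in\cR$.
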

\begin{proof}  If $L$ is invertible, then (a) holds. Assume that $L$ is singular.
Let us choose distinct $\xi_1,\ldots,\xi_k$ for $k=(2n-2)^2+1$, introduce the vectors
\begin{equation}
y_j=Lh(\xi_j)\in\mathcal R\quad\mbox{for}\quad j=1,\ldots,k,
\label{sety}
\end{equation}
and define an equivalence relation on the set \eqref{sety}: {\em $y_i\sim y_j$ if and only if $y_i$ and $y_j$ are 
multiples of each other}. Since $L$ is singular, any collection of $2n-1$ vectors from \eqref{sety} is linearly dependent. 
By Proposition \ref{R:1.3},  any collection of $2n-1$ vectors from \eqref{sety} contains two equivalent
vectors, i.e., the two vectors that are multiples of each other. Therefore, there are at most $2n-2$ equivalence classes, and one of them must contain at least $2n-1$ elements,
say, $y_1, \dots, y_{2n-1}$. Then the matrix $Y=\begin{bmatrix}y_1&\ldots&y_{2n-1}\end{bmatrix}$ has rank one. Since
$$
Y=\begin{bmatrix}Lh(\xi_1)&\ldots&Lh(\xi_{2n-1})\end{bmatrix}=
L\begin{bmatrix}h(\xi_1)&\ldots&h(\xi_{2n-1})\end{bmatrix}
$$ 
where $\begin{bmatrix}h(\xi_1)&\ldots&h(\xi_{2n-1})\end{bmatrix}$ is an invertible Vandermonde matrix, we conclude
that ${\rm rank} \, L=1$. It remains to show that the latter cannot happen if $\mathbb F=\mathbb C$ and may occur if 
$\mathbb F=\mathbb R$. To this end, we write $L$ of rank one in the form 
\begin{equation}
L={\bf b} {\bf p}, \; \; \mbox{where}\; \;  {\bf b}=\sbm{b_1 \vspace{-1mm}\\ \vdots \\ b_{2n-1}}\neq 0,
\quad {\bf p}=\begin{bmatrix}p_0 & \ldots & p_{2n-2}\end{bmatrix}\neq 0.
\label{2x}
\end{equation}
Since the vectors $Lh(0)={\bf b}p_0$ and $Lh(\infty)={\bf b}p_{2n-2}$ belong to $\mathcal R$ and in particular are nonzero,  
we have $p_0\neq 0$, $p_{2n-2}\neq 0$, and ${\bf b}\in\mathcal R$. Upon rescaling ${\bf p}$ by a nonzero $\mu$ (if necessary)
we conclude that either
\begin{equation}
L=h(\infty){\bf p}\quad\mbox{or}\quad L=h(\zeta){\bf p}\quad\mbox{for some} \; \; \zeta\in\mathbb F.
\label{2y}
\end{equation}
Besides, $Lh(\xi)\in\mathcal R$ and hence $Lh(\xi)\neq 0$ for each $\xi\in\mathbb F$, so that 
\begin{equation}
{\bf p}h(\xi)=p_0+p_1\xi+\ldots+p_{2n-2}\xi^{2n-2}:=p(\xi)\neq 0\quad\mbox{for all} \; \; \xi\in\mathbb F,
\label{2z}
\end{equation}
where $p(x)$ is a polynomial of degree $\deg p=2n-2$. If $\mathbb F=\mathbb C$, we get a contradiction, since
any complex polynomial of positive degree has a root in $\mathbb C$ (we exclude the trivial case $n=1$). 
If $\mathbb F=\mathbb C$, we do have rank-one matrices $L$ of the form \eqref{2y} with ${\bf p}$ subject to condition \eqref{2z}.
\end{proof}

We are now ready to present the characterization of
$L \in M_{2n-1}$ satisfying $L(\cR) \subseteq \cR$, where $\cR$ is defined as in 
(\ref{calr}). We will use matrices $F_{2n-1}$, $D_{2n-1}(r), V_{2n-1}(\alpha), W_{2n-1}(\alpha,\beta)$ introduced in (\ref{a}),
(\ref{4}) and (\ref{7}). 

\begin{theorem} \label{main1} A matrix $L \in M_{2n-1}$ satisfies $L(\cR) \subseteq \cR$
if and only if one of the following holds:
\begin{enumerate} 
\item[{\rm (a)}] $L^\top$ is of one of the following forms: 
\begin{itemize}
\item{\rm (i)} \hspace{1mm} $L^\top=\gamma V_{2n-1}(\alpha) D_{2n-1}(r)$, 
\item{\rm (ii)} \, $L^\top=\gamma V_{2n-1}(\alpha)F_{2n-1}D_{2n-1}(r)$, 
\item{\rm (iii)}\hspace{0.4mm}  $L^\top=\gamma  W_{2n-1}(\alpha,\beta)D_{2n-1}(r)$,
\end{itemize}
for some $\gamma, r, \alpha, \beta\in \IF$ with $\gamma r \ne 0$ and $\alpha \ne \beta$. 

\item[{\rm (b)}] $\IF = \IR$ and $L = h[c_1, \dots, c_{2n-1}]$
for an $h\in \cR$ and  $c_1, \dots, c_{2n-1} \in \IR$ such that 
$$
c_1 + c_2 x + \dots + c_{2n-1}x^{2n-2} \ne 0  \quad\mbox{for any} \; \;  x \in \IR.
$$ 
\end{enumerate}
\end{theorem}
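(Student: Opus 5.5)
The plan is to prove sufficiency from the lemmas already established, and necessity by reducing to the invertible case and showing that $L$ acts on the parameter $\xi$ by a Möbius transformation.

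\emph{Sufficiency.} $D_{2n-1}(r)$ and $F_{2n-1}$ are symmetric. So in case (a) we have
$$L=\gamma D_{2n-1}(r)V_{2n-1}(\alpha)^\top,\qquad L=\gamma D_{2n-1}(r)F_{2n-1}V_{2n-1}(\alpha)^\top,\qquad L=\gamma D_{2n-1}(r)W_{2n-1}(\alpha,\beta)^\top .$$
Each is a product of matrices that preserve $\cR$ by Lemma~\ref{lem0}, Lemma~\ref{L:2.1} and Lemma~\ref{L:2.2}. Lemma~\ref{lem0} also says this set is closed under multiplication, so each such $L$ preserves $\cR$. In case (b), $Lh(\xi)=c(\xi)h$ with $c(\xi)\neq 0$ for $\xi\in\IR$. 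Also $Lh(\infty)=c_{2n-1}h$, and $c_{2n-1}\ne0$ because a polynomial of odd degree has a real root.

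\emph{Necessity.} By Lemma~\ref{lem2}, either (b) holds or $L$ is invertible, so assume $L$ is invertible. Write $\ell_j$ for the $j$-th row of $L$ and set $q_j(\xi)=\ell_j h(\xi)$, a polynomial of degree at most $2n-2$. Since $Lh(\xi)\in\cR$, condition \eqref{calr-2} gives $q_j^2=q_{j-1}q_{j+1}$ as polynomial identities (they hold for all $\xi\in\IF$, an infinite set).

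The key step, and I expect the main obstacle, is to show that the rational function $\varphi=q_2/q_1$ has degree at most one. I would proceed as follows.
\begin{itemize}
\item $q_1\not\equiv 0$. Otherwise $q_1^2=0$ and the relations propagate to force $q_2=\cdots=q_{2n-2}=0$. Then every $Lh(\xi)$ is a multiple of $h(\infty)$, contradicting invertibility.
\item Hence $q_{j+1}=q_2^{\,j}/q_1^{\,j-1}$ for all $j$.
\item Write $\varphi=a/b$ in lowest terms with $d=\max(\deg a,\deg b)$. Since each $q_{j+1}=q_1\varphi^j$ is a polynomial, $b^{2n-2}$ divides $q_1$. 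So $q_1=b^{2n-2}c$ and $q_{j+1}=a^jb^{2n-2-j}c$ for $j=0,\dots,2n-2$, with $c$ a polynomial.
\item The degree bound $\deg q_j\le 2n-2$, applied to $q_1=b^{2n-2}c$ and to $q_{2n-1}=a^{2n-2}c$, gives $(2n-2)d+\deg c\le 2n-2$.
\item $d=0$ would make all $Lh(\xi)$ proportional, so $L$ would have rank one, which is impossible. Hence $d=1$ and $c=\gamma\ne0$ is constant.
\end{itemize}
Consequently
$$Lh(\xi)=\gamma\,[\,b(\xi)^{2n-2-j}a(\xi)^j\,]_{j=0}^{2n-2}$$
with $a,b$ coprime of degree at most one and not both constant. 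Because the $h(\xi)$ span $\IF^{2n-1}$, this identity determines $L$ uniquely.

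\emph{Matching the forms.} The last step compares these rows with
$$[1\ x\ \dots\ x^{2n-2}]\,L^\top$$
for the three forms, using \eqref{6} and \eqref{9}.
\begin{itemize}
\item If $b$ is a nonzero constant, then $a$ has degree one; after absorbing constants, $\varphi(x)=r(x+\alpha)$. This is form (i), since $[1\ x\ \cdots]V(\alpha)D(r)=[\,r^j(x+\alpha)^j\,]$.
\item If $\deg b=1$ and $a$ is constant ($a\ne0$ by coprimality), then $\varphi=r/(x+\alpha)$. This gives form (ii), since reversing $h(x+\alpha)$ via $F$ yields $[\,(x+\alpha)^{2n-2-j}r^j\,]$.
\item If both have degree one, write $b=b_0(x+\beta)$ and $a=a_0(x+\alpha)$. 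Coprimality gives $\alpha\ne\beta$, and \eqref{9} gives form (iii) with $r=a_0/b_0$ and $\gamma$ rescaled by $b_0^{2n-2}$.
\end{itemize}
In each case uniqueness yields the stated $L^\top$.
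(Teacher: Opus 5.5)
Your sufficiency argument for (b) has a false step. You write ``$c_{2n-1}\neq 0$ because a polynomial of odd degree has a real root,'' but this does not follow. If $c_{2n-1}=0$, then $c(x)=c_1+c_2x+\cdots+c_{2n-2}x^{2n-3}$ has degree \emph{at most} $2n-3$, and that degree can be even. Take $[c_1,\dots,c_{2n-1}]=[1,0,\dots,0]$, so $c(x)\equiv 1$; or take $c(x)=1+x^2$ when $n\ge 3$. Such a vector satisfies the hypothesis of (b), yet $Lh(\infty)=c_{2n-1}h=0\notin\cR$. So this implication is not merely unproved: it is false for (b) as literally stated, and no argument can close it.

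The paper's own proof has the same oversight, since it says only ``If (b) holds, then clearly $L(\cR)\subseteq\cR$.'' The repair is to add $c_{2n-1}\neq 0$ to (b). Equivalently, require that the binary form $\sum_{j=0}^{2n-2}c_{j+1}x^jy^{2n-2-j}$ have no nontrivial real zero. This is exactly what the proof of Lemma~\ref{lem2} delivers ($p_{2n-2}\neq0$ because $Lh(\infty)\neq 0$), so the necessity direction is unaffected. With that condition added, your computation $Lh(\xi)=c(\xi)h$, $Lh(\infty)=c_{2n-1}h$ finishes sufficiency.

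Sufficiency for (a) is fine. Your treatment of the invertible case is correct and takes a genuinely different route from the paper.
\begin{itemize}
\item \emph{The paper's route.} It normalizes the leading coefficients into a geometric progression $\gamma r^{j-1}$ and picks a possibly complex root $-\alpha$ of some row polynomial. It then shows the multiplicities $\ell_j$ form an arithmetic progression in $[0,2n-2]$, hence $\ell_j=j-1$ or $2n-1-j$. Finally it peels off the cofactors $Q_j$ by a degree count.
\item \emph{Your route.} You write $q_{j+1}=q_1\varphi^j$ with $\varphi=a/b$ in lowest terms, so $b^{2n-2}\mid q_1$. The single inequality $(2n-2)\max(\deg a,\deg b)+\deg c\le 2n-2$ then forces $\varphi$ to be a linear fractional map. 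The forms (i)--(iii) are just the three possibilities for $(\deg a,\deg b)$.
\end{itemize}
Your argument stays inside $\IF[x]$. Over $\IR$ you never need to check that the complex root $-\alpha$ (and $-\beta$) is actually real, which the paper leaves implicit. It also exhibits the action $\xi\mapsto\varphi(\xi)$ on the parameter directly.

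One small tightening: the formula $q_{j+1}=q_2^{\,j}/q_1^{\,j-1}$ needs the intermediate $q_j$ to be nonzero too, not just $q_1$. For example, $q_2\equiv0$ would leave $q_{2n-1}$ unconstrained. This is immediate, because an invertible matrix has no zero row.
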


\begin{proof}  If (a) holds, then $L(\cR) \subseteq \cR$, by Lemmas \ref{L:2.1} and \ref{L:2.2}.
If (b) holds, then clearly $L(\cR) \subseteq \cR$.

\smallskip

We focus on the necessity. By Lemma \ref{lem2}, if $L$ is singular, then (b) holds.
So, we focus on the case when $L$ is invertible.
With $L=\left[p_{ij}\right]_{i=1,\ldots,2n-1}^{j=0,\ldots,2n-2}$ we associate the polynomials 
\begin{equation}
P_j(x)= p_{j0} + p_{j1} x + \cdots + p_{j,2n-2}x^{2n-2}\quad\mbox{for}\quad j = 1, \dots, 2n-1.
\label{Pj}
\end{equation}
Then
\begin{equation}
Lh(x)=\sbm{P_1(x)\\ P_2(x)\vspace{-1mm}\\ \vdots \\ P_{2n-1}(x)}\quad\mbox{for all} \; \; x\in\mathbb F \; \; \mbox{and}\; \; 
h(x)=\sbm{1\\ x\vspace{-1mm}\\ \vdots \\ x^{2n-2}}.
\label{Pj1}
\end{equation}
It is clear that $P_j(x)$ completely determines the $j$-th row of $L$. 
We will show that one of the following three cases holds.
\begin{eqnarray}
{\rm (a)} \; P_{j}(x)&=& \gamma r^{j-1}(x+\alpha)^{j-1},\nonumber\\
{\rm (b)} \; P_{j}(x) &=& \gamma r^{j-1}(x+\alpha)^{2n-j-1},\label{ps}\\
{\rm (c)} \; P_{j}(x) &=& \gamma r^{j-1}(x+\alpha)^{j-1}(x+\beta)^{2n-j-1},\quad j=1,\ldots, 2n-1\nonumber
\end{eqnarray}
for some $r,\gamma,\alpha, \beta\in\mathbb F$ such that $r,\gamma\neq 0$ and $\alpha\neq \beta$. 
Then $L$ will be in the form in (a).

\medskip
Suppose $L\in M_{2n-1}$ is invertible and $L(\cR) \subseteq \cR$.
Since a nonzero vector $h = (h_1, \dots, h_{2n-1})^\top$ belongs to $\cR$ if and only if 
$h_{j-1}h_{j+1} = h_{j}^2$ for $j = 2, \dots, 2n-2$ (see \eqref{calr-2})  we have
\begin{equation}
P_{j-1}(x) P_{j+1}(x) = P_{j}(x)^2 \; \; \mbox{for} \; \; j = 2, \dots, 2n-2. 
\label{eq}
\end{equation}
Since $L$ is invertible, $P_j\not\equiv 0$ and we let $\gamma_j\neq 0$ to denote its leading coefficient.
Then $\widetilde{P}_j(x) = P_j(x)/\gamma_j$ is a monic polynomial, and we have by \eqref{eq},         
$$
\gamma_j \gamma_{j+2} = \gamma_{j+1}^2 \quad \hbox{ and } \quad
\widetilde{P}_{j-1}(x) \widetilde{P}_{j+1}(x) = \widetilde{P}_{j}(x)^2 \quad \mbox{for} \; \; j = 2, \dots, 2n-2.
$$
By the first relation, the nonzero sequence $\gamma_1, \dots, \gamma_{2n-1}$ is geometric and hence,
$$
\gamma_j=\gamma r^{j-1}\quad\mbox{for} \; \; j=1,\ldots.2n-1,
$$
where $\gamma=\gamma_1$ and $r=\gamma_2/\gamma_1$. Next, we turn to $\widetilde{P}_1, \dots, \widetilde{P}_{2n-1}$.
Since $L$ is invertible, there exists $k$ such that $\deg P_k=\deg\widetilde{P}_k>0$.
Then $\widetilde{P}_k$ has a (complex) root $x=-\alpha$. Let us factor the polynomials $\widetilde{P}_j$ as 
\begin{equation}
\widetilde{P}_j(x)=(x+\alpha)^{\ell_j}Q_j(x), \quad \ell_j\ge 0, \; \; j=1,\ldots,2n-1,
\label{eqa}
\end{equation}
where $Q_j$ is a monic polynomial such that $Q_j(-\alpha)\neq 0$. By our assumption, $\ell_k>0$. Also, 
$\ell_j\le 2n-2$ since $\deg \widetilde{P}_j\le 2n-2$ for all $j$. Substituting the latter 
factorizations into \eqref{eq}, we get
$$
(x+\alpha)^{\ell_{j-1}+\ell_{j+1}}Q_{j-1}(x) Q_{j+1}(x)=(x+\alpha)^{2\ell_{j}}Q_{j}(x)^2.
$$
As a result, for $j=1,\ldots, 2n-2$, 
\begin{equation}
\ell_{j-1}+\ell_{j+1}=2\ell_{j}\quad\mbox{and}\quad
Q_{j-1}(x) Q_{j+1}(x)=Q_{j}(x)^2  .
\label{eqb}
\end{equation}
Therefore, $\boldsymbol\ell=\{\ell_1,\ldots,\ell_{2n-1}\}$ is a nonnegative integer arithmetic sequence bounded above by $2n-2$ 
and with $\ell_k\ge 1$.
If $\ell_j=\ell_k$ for all $j$, then $P_j(-\alpha)=0$ for all $j$ and hence $Lh(-\alpha)=0$, which contradicts the invertibility
of $L$. If the sequence $\boldsymbol\ell$ increases or decreases, then it contains $2n-1$ distinct integers bounded by zero and 
$2n-2$. Hence, either $\ell_j=j-1$ or $\ell_j=2n-j-1$ for all $j=1,\ldots,2n-1$. In view of \eqref{eqa}, we conclude that either
\begin{equation}
\widetilde{P}_j(x)=(x+\alpha)^{j-1}Q_j(x)\quad\mbox{or}\quad \widetilde{P}_j(x)=(x+\alpha)^{2n-j-1}Q_j(x)
\label{eqc}
\end{equation}
for all $j=1,\ldots,2n-1$. In the first case, we have $Q_{2n-1}\equiv 1$, since $Q_{2n-1}$ is monic and 
$\deg \widetilde{P}_{2n-1}$ cannot exceed $2n-2$. For the same reason $\deg Q_{2n-2}\le 1$ and hence either
$Q_{2n-2}\equiv 1$ or $Q_{2n-2}(x)=x+\beta$ for some $\beta\in\mathbb F$. Then we recursively get from \eqref{eqb} that either 
$$
Q_j\equiv 1\quad\mbox{or}\quad Q_j(x)=(x+\beta)^{2n-j-1}\quad\mbox{for all}\; \;  j=1,\ldots,2n-1.
$$
Combining the latter formulas with the first formula in \eqref{eqc} and recalling that $P_j(x)=\gamma r^{j-1}\widetilde{P}_j(x)$
we get cases (a) and (c) in\eqref{ps}. In case $\widetilde{P}_j$ is given by the second formula in \eqref{eqc}, we use the same 
degree argument to conclude that 
either
$$
Q_j\equiv 1\quad\mbox{or}\quad Q_j(x)=(x+\beta)^{j-1}\quad\mbox{for all}\; \;  j=1,\ldots,2n-1.
$$
Combining the latter with the second formula in \eqref{eqc} we come to polynomials $P_j$ as in part (b) in \eqref{ps} or
as in part (c) with $\alpha$ and $\beta$ switched. \end{proof}

\smallskip

We may use Theorem \ref{main1} to describe the rank one preservers on $\cT_n$ in a more traditional form \eqref{class}.
We first observe that any linear functional $f: \, \mathcal T_n\to \mathbb R$ is of the form 
\begin{equation}
f_{\bf c}: \;A=\left[a_{i-j}\right]_{i,j=1}^n\mapsto \sum_{j=1}^{2n-1}c_ja_{n-j},\quad {\bf c}=(c_1,\ldots,c_{2n-1})\in\mathbb R^{2n-1}.
\label{adf}
\end{equation}
Let us say that $f_{\bf c}$ is {\em admissible} if 
$$
p(x):=\sum_{j=0}^{2n-2}c_{j+1}x^j\neq 0\quad\mbox{for all} \; \; x\in\mathbb R.
$$
The latter means that for any matrix 
\begin{equation}
X_\xi = \sbm{\xi^{n-1}\\ \vdots \\ \xi \\ 1}\sbm{1 & \xi & \ldots& \xi^{n-1}}\quad\mbox{with} \; \;
\xi \in \IR,
\label{xxi}
\end{equation}
we have $f(X_\xi) = {\displaystyle \sum_{j=0}^{2n-2} c_{j+1} \xi^j} \ne 0$.
\begin{theorem} \label{main-man}
Let $T: {\cT}_n\rightarrow {\cT}_n$ be a linear map.
Then $T$ preserves rank one matrices in ${\cT}_n$  if and only if one of the following holds.
\begin{itemize}
\item[{\rm (a)}] There are nonzero $\gamma, r \in \IF$ such that $T$ has the form 
$$A \mapsto MAN$$
with $M = \gamma F_n N^\top F_n$ and $N$ having one of the following 
form:
$${\rm (1)} \; N=V_n(\alpha)D_n(r),
\quad {\rm (2)} \;N=V_n(\alpha)F_nD_n(r),\quad 
{\rm (3)} \;  N=W_n(\alpha,\beta)D_n(r),
$$
where $D_n(r)$  and $F_n$ are defined in \eqref{a}.

\item[{\rm (b)}] $\IF = \IR$
and $T$ has the form $A \mapsto f_{\bf c}(A)B$ for some rank one matrix $B \in \cT_n$
and an admissible linear functional $f_{\bf c}$ on $\cT_n$.
 \end{itemize}
\end{theorem}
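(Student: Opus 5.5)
The plan is to translate Theorem \ref{main1} from coordinates back to matrices. Write $v(\xi)=[1,\xi,\ldots,\xi^{n-1}]^\top$, so that the matrix $X_\xi$ in \eqref{xxi} equals $F_nv(\xi)v(\xi)^\top$. The basis $\cB$ is ordered as $Z_n^{n-1},\ldots,Z_n,I_n,Z_n^\top,\ldots$, so the $j$-th coordinate of $A=[a_{i-j}]$ is $a_{n-j}$. Hence the coordinate vector of $X_\xi$ is $h(\xi)$, and that of ${\bf e}_1{\bf e}_n^\top$ is $h(\infty)$. Two facts drive everything:
\begin{itemize}
\item[(1)] By Proposition \ref{R:1.3}, any $2n-1$ distinct vectors $h(\xi_j)$, $\xi_j\in\IF$, form a basis of $\IF^{2n-1}$. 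So $\{X_{\xi}:\xi\in\IF\}$ spans $\cT_n$.
\item[(2)] Consequently, two linear maps on $\cT_n$ that agree on every $X_\xi$ coincide.
\end{itemize}

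\textbf{Step 1: transformation rules for $N$.} From \eqref{6}, \eqref{9} and the obvious identities $v(x)^\top D_n(r)=v(rx)^\top$ and $v(x)^\top F_n=x^{n-1}v(1/x)^\top$ for $x\ne0$, I get
$$
v(x)^\top N=\begin{cases} v(r(x+\alpha))^\top, & N=V_n(\alpha)D_n(r),\\
(x+\alpha)^{n-1}v\bigl(r/(x+\alpha)\bigr)^\top, & N=V_n(\alpha)F_nD_n(r),\\
(x+\beta)^{n-1}v\bigl(r(x+\alpha)/(x+\beta)\bigr)^\top, & N=W_n(\alpha,\beta)D_n(r).\end{cases}
$$
These hold for generic $x$, and polynomial identities then extend them to all $x$ after clearing denominators.

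\textbf{Step 2: action of $A\mapsto MAN$ on $X_\xi$.} With $M=\gamma F_nN^\top F_n$, I compute
$$MX_\xi N=\gamma F_n\,(N^\top v(\xi))\,(v(\xi)^\top N).$$
By Step 1 this is $\gamma\lambda^2X_y$, where $v(\xi)^\top N=\lambda v(y)^\top$. Explicitly:
\begin{itemize}
\item[(i)] $\gamma X_{r(\xi+\alpha)}$;
\item[(ii)] $\gamma(\xi+\alpha)^{2n-2}X_{r/(\xi+\alpha)}$;
\item[(iii)] $\gamma(\xi+\beta)^{2n-2}X_{r(\xi+\alpha)/(\xi+\beta)}$.
\end{itemize}
At the exceptional points the result is instead a multiple of ${\bf e}_1{\bf e}_n^\top$.

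On the coordinate side, the matrices $L$ of Theorem \ref{main1}(a) satisfy $L=\gamma D_{2n-1}(r)V_{2n-1}(\alpha)^\top$ in case (i), and similarly in the other cases. The same computations in dimension $2n-1$ give $Lh(\xi)=\gamma h(r(\xi+\alpha))$ in case (i), and the analogous formulas in cases (ii) and (iii). These match the coordinate vectors of the images in Step 2.

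\textbf{Sufficiency for (a).} $MX_\xi N\in\cT_n$ for all $\xi$, and these $X_\xi$ span $\cT_n$ by fact (1). Hence $A\mapsto MAN$ maps $\cT_n$ into $\cT_n$. Its coordinate matrix agrees with the corresponding $L$ on the basis vectors $h(\xi)$, so it is exactly that $L$. Therefore it preserves $\cR$, i.e. it preserves rank one, by Theorem \ref{main1}. Alternatively, one can argue directly: $M,N$ are invertible (as $\det W_n\ne0$), so $MAN$ has rank one whenever $A$ does.

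\textbf{Sufficiency for (b).} This is immediate: $f_{\bf c}(X_\xi)=p(\xi)\neq 0$ and $f_{\bf c}({\bf e}_1{\bf e}_n^\top)=c_{2n-1}$. Here $c_{2n-1}\ne0$, since $p$ has no real root and degree $2n-2$ must hold (otherwise an odd-degree polynomial would have a real root; if needed I would instead require $Lh(\infty)\in\cR$, exactly as in the proof of Lemma \ref{lem2}).

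\textbf{Necessity.} Let $L=[T]_\cB$. Then $L(\cR)\subseteq\cR$ by Proposition \ref{P:1.1}, so Theorem \ref{main1} applies.
\begin{itemize}
\item In case (b), $L=h{\bf c}$ means $T(A)=({\bf c}\cdot[A]_\cB)B=f_{\bf c}(A)B$, where $B$ is the rank one Toeplitz matrix with coordinate vector $h$. The condition on ${\bf c}$ is precisely admissibility.
\item In case (a), take the map $A\mapsto MAN$ with the same $\gamma,r,\alpha,\beta$. Its coordinate matrix coincides with $L$ on all $h(\xi)$ by the comparison after Step 2, hence everywhere. So $T$ has the asserted form.
\end{itemize}

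\textbf{Main obstacle.} The only real difficulty is bookkeeping. One must match the parameters and the order of factors ($D$ after $V$ or $W$, the placement of $F$) between $L^\top$ in Theorem \ref{main1} and $N$ here. One must also check that the coordinate conventions, namely the reversal $a_{n-j}$ and the factor $F_n$ in $X_\xi$, really yield $M=\gamma F_nN^\top F_n$ rather than some variant. I would verify this first on case (i) with $n=2$ before writing the general computation.
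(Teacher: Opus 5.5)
Your route is the paper's: use that the $X_\xi$ span $\cT_n$, and write $X_\xi=F_nv(\xi)v(\xi)^\top$ so that $MX_\xi N=\gamma F_n(N^\top v(\xi))(v(\xi)^\top N)$. Then match this action with the three forms of $L^\top$ in Theorem \ref{main1}. Your Step 1--2 formulas and the coordinate matches are correct: $\gamma h(r(\xi+\alpha))$, $\gamma(\xi+\alpha)^{2n-2}h(r/(\xi+\alpha))$ and $\gamma(\xi+\beta)^{2n-2}h(r(\xi+\alpha)/(\xi+\beta))$. This is more careful than the paper's bullet list, since you also check that $A\mapsto MAN$ lands in $\cT_n$.

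The step that fails is your justification of $c_{2n-1}\neq 0$ in the sufficiency of (b). A real polynomial with no real zeros need not have degree $2n-2$; it can have smaller even degree. For example, take ${\bf c}=(1,0,\ldots,0)$, so $p\equiv 1$. Then $f_{\bf c}$ is admissible in the paper's sense, yet $T({\bf e}_1{\bf e}_n^\top)=c_{2n-1}B=0$. So (b) as literally stated is not sufficient, and no argument can rescue it. The same omission occurs in Lemma \ref{lem2}(b) and Theorem \ref{main1}(b), although the proof of Lemma \ref{lem2} does derive $p_{2n-2}\neq 0$ from $Lh(\infty)\in\cR$.

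Your fallback is the correct repair. Admissibility must also require $c_{2n-1}\neq 0$, equivalently $f_{\bf c}(X)\neq 0$ for every rank one $X\in\cT_n$, and the necessity direction supplies this automatically.

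Similarly, in (a)(3) your claim $\det W_n\neq 0$ needs $\alpha\neq\beta$, which should be carried over from Theorem \ref{main1}. When $\alpha=\beta$, we have $v(-\alpha)^\top W_n(\alpha,\alpha)=0$, so the map sends the rank one matrix $X_{-\alpha}$ to $0$.
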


\begin{proof} The set $\hat \cR$ of matrices in $\cT_n$ of the form \eqref{xxi}
contains a generating set of $\cT_n$, and the structure of a linear 
map $T: \cT_n\rightarrow \cT_n$ is completely determined by its action on 
the matrices in $\hat \cR$.  
It is clear that a linear map $T: \cT_n\rightarrow \cT_n$ preserves rank one matrices if and only if 
the matrix $L = [T]_\cB$ satisfies conditions (a) or (b) in Theorem \ref{main1}.

\smallskip

If $L$ satisfies Theorem \ref{main1} (b), then one readily
checks that condition (b) holds.

Suppose $L = [T]_\cB$ is invertible.  It is easily verified that 

\begin{itemize}
\item if $L = \gamma D_{2n-1}(r)$, then the action of $T$ on $X_\xi$ has the form  
$$
X_\xi \mapsto \gamma F_nD_n(r)F_n X_\xi D_n(r);
$$
\item if $L = V_{2n-1}(\alpha)$, then 
the action of $T$ on $X_\xi$ has the form  
$$
X_\xi \mapsto \gamma F_n V_n(\alpha)^\top F_n X_\xi V_n(\alpha);
$$

\item if $L = W_{2n-1}(\alpha,\beta)$, then the action of $T$ 
on $X_\xi$ has  the form 
$$
X_\xi \mapsto \sbm{(\xi+\alpha)^{n-1} \\ (\xi+\alpha)^{n-2}(\xi+\beta)
\vspace{-1mm} \\ \vdots \\ (\xi+\alpha)(\xi+\beta)^{n-2} \\ (\xi+\beta)^{n-1}}\sbm{(\xi+\beta)^{n-1}&
(\xi+\beta)^{n-2}(\xi+\alpha)&\ldots&(\xi+\beta)(\xi+\alpha)^{n-2}&(\xi+\alpha)^{n-1}},
$$
that is, $X_\xi \mapsto F_n W(\alpha,\beta)^\top F_n X_\xi W(\alpha,\beta)$.
\end{itemize}
Together with the fact that $F_n F_n = I_n$, one can check that
 Theorem \ref{main1} (a) holds if and only if
 condition (a) in the theorem holds, with $N$ having the three
 forms according to (i), (ii), (iii) in Theorem \ref{main1} holds.
 \end{proof}

In the classical result, rank one preservers has the form 
$A \mapsto RAS$ or $A \mapsto RA^\top S$ for some invertible matrices $R, S$.
In our case, when $\IF = \IR$ there are rank one preserver $T$ such that
the range space has dimension 1. In all other cases, 
$T$ has the form $A \mapsto RAS$ for some invertible $R$ and $S$ with more 
restrictions, which can be determined by at most four parameters $\gamma, r, \alpha, \beta$.
Also, 
note that in $\cT_n$, the map $A \mapsto A^\top$ can be written as $A \mapsto F_nAF_n$. 
So, the mappings of the form $A\mapsto RA^\top S$ will reduce to mappings of the form
$A \mapsto \tilde R A \tilde S$ by suitable adjustment of $R$ and $S$.

\section{Related problems and concluding remarks}

\subsection{Rank preservers and determinant preservers}

One may consider linear maps $T$ on $\cT_n$ such that 
$T(\hat \cR) = \hat \cR$, where $\hat \cR$ is the set of
rank one matrices in $\cT_n$.
In such a case, we say that $T$ {\bf strongly preserves} of
$\hat\cR$.
Since $\hat \cR$ contains a spanning set of $\IF^{2n-1}$, 
such a map will map a spanning set
onto a spanning set of $\cT_n$, and must be invertible.
Hence, $T$ is invertible and preserves $\hat \cR$.
So, $T$ has the form as in Theorem \ref{main-man} (a).
We may consider other preserving conditions that lead to the 
same structure.

\begin{theorem} Suppose $T: \cT_n\rightarrow \cT_n$ with 
$[T]_\cB = L \in M_{2n-1}$. The following conditions are equivalent.
\begin{itemize}
\item[{\rm (1)}] $L$ satisfies Theorem {\rm \ref{main1} (a)}, i.e., 
$T$ satisfies Theorem {\rm \ref{main-man} (a)}.
\item[{\rm (2)}] $L$ is invertible and $L^{-1}(\cR)$, i.e.,
$T$ is invertible and $T^{-1}$ maps the set of rank one matrices in $\cT_n$ into
itself. 
\item[{\rm (3)}] $L(\cR) = \cR$, i.e.,
$T$ maps the set of rank one matrices $\cT_n$ onto itself. 
\end{itemize}
\end{theorem}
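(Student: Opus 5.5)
Condition (2) should be read as: $L$ is invertible and $L^{-1}(\cR)\subseteq\cR$. The plan is to prove (1)$\Rightarrow$(3)$\Rightarrow$(2)$\Rightarrow$(1). Throughout, the workhorse is Theorem \ref{main1}: an invertible $L$ with $L(\cR)\subseteq\cR$ is exactly a matrix of the form in Theorem \ref{main1} (a). Lemma \ref{lem2} says the only singular $\cR$-preservers are the rank one maps of Theorem \ref{main1} (b).

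\emph{(1)$\Rightarrow$(3).} Suppose $L$ is of the form in Theorem \ref{main1} (a). Then $L(\cR)\subseteq\cR$ by Lemmas \ref{lem0}, \ref{L:2.1}, \ref{L:2.2}, and it remains to show that $L^{-1}$ also preserves $\cR$. I will write $L$ as a product of the elementary factors $\gamma I$, $D_{2n-1}(r)^\top=D_{2n-1}(r)$, $F_{2n-1}$, $V_{2n-1}(\alpha)^\top$ and $W_{2n-1}(\alpha,\beta)^\top$, and check that each factor has an inverse preserving $\cR$:
\begin{itemize}
\item $(\gamma I)^{-1}=\gamma^{-1}I$, $D_{2n-1}(r)^{-1}=D_{2n-1}(r^{-1})$ and $F_{2n-1}^{-1}=F_{2n-1}$; these are covered by Lemma \ref{lem0}.
\item $V_{2n-1}(\alpha)^{-1}=V_{2n-1}(-\alpha)$, by the uniqueness in Lemma \ref{L:2.1}, since the shifts $x\mapsto x+\alpha$ and $x\mapsto x-\alpha$ compose to the identity.
\item For $W$, the cleanest route is to avoid computing $W^{-1}$ directly. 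The map $h(\xi)\mapsto$ (multiple of) $h\bigl((\xi+\alpha)/(\xi+\beta)\bigr)$, extended to $\xi=-\beta$ and $\xi=\infty$, is a bijection of $\IF\cup\{\infty\}$, because a Möbius transformation with $\alpha\neq\beta$ is bijective. Hence $L(\cR)=\cR$ for $L=W^\top$, and therefore $L^{-1}(\cR)=\cR$.
\end{itemize}
The same bijectivity remark gives $L(\cR)=\cR$ for every factor: the underlying maps on $\IF\cup\{\infty\}$ are $\xi\mapsto r\xi$, $\xi\mapsto 1/\xi$, $\xi\mapsto\xi+\alpha$ and the Möbius map above. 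So the product satisfies $L(\cR)=\cR$, which is (3).

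\emph{(3)$\Rightarrow$(2).} Suppose $L(\cR)=\cR$. Since $\cR$ spans $\IF^{2n-1}$ (it contains the columns of an invertible Vandermonde matrix), the range of $L$ contains a spanning set, so $L$ is surjective and hence invertible. Then $L^{-1}(\cR)=\cR$, and in particular $L^{-1}(\cR)\subseteq\cR$.

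\emph{(2)$\Rightarrow$(1).} Suppose $L$ is invertible and $L^{-1}(\cR)\subseteq\cR$. Apply Theorem \ref{main1} to $L^{-1}$: since it is invertible, it is of form (a). By the step (1)$\Rightarrow$(3) applied to $L^{-1}$, we get $L^{-1}(\cR)=\cR$, hence $L(\cR)=\cR\subseteq\cR$. Theorem \ref{main1} applied to the invertible matrix $L$ now puts $L$ in form (a).

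The main obstacle is the precise bookkeeping in (1)$\Rightarrow$(3): verifying that each elementary factor acts on $\cR$ as a \emph{bijection} of the parameter set $\IF\cup\{\infty\}$, with nonzero scalars. Over $\IR$ with $r<0$ this needs some care, but the parameter maps are still bijections. Once that surjectivity onto $\cR$ is established for each factor, the remaining implications are formal.
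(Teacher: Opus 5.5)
Your proof is correct, but it establishes the key step differently from the paper. That step is showing that the inverse of a form-(a) matrix again preserves $\cR$.

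The paper argues by interpolation. An invertible $L$ with $L(\cR)\subseteq\cR$ sends $h(\xi_j)$ to nonzero multiples of $h(\zeta_j)$ with pairwise distinct $\zeta_j$, after discarding at most one point sent to a multiple of $h(\infty)$. So $L^{-1}$ maps enough distinct vectors $h(\zeta_j)$ into $\cR$ that the relations \eqref{eq} hold identically for the row polynomials of $L^{-1}$. The proof of Theorem \ref{main1} is then rerun for $L^{-1}$, which gives (1)$\Rightarrow$(2). Next, (2)$\Rightarrow$(3) follows by swapping $L$ and $L^{-1}$, and (3)$\Rightarrow$(1) follows from the invertibility remark preceding the theorem.

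You instead factor $L$ into $\gamma I$, $D_{2n-1}(r)$, $F_{2n-1}$, $V_{2n-1}(\alpha)^\top$ and $W_{2n-1}(\alpha,\beta)^\top$. Each factor acts on $\cR$ through a bijection of $\IF\cup\{\infty\}$ (a M\"obius map) with nonzero scalar factors, which gives $L(\cR)=\cR$ directly. Your route is more conceptual: it shows the invertible preservers are scalars times the action of M\"obius transformations on the curve $\{h(\xi)\}$, and it avoids a second pass through the polynomial analysis. The paper's route needs no explicit knowledge of how the factors act, only invertibility plus the structure argument.

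The one point to nail down when you write it out is the one you flagged. From \eqref{9}, $W_{2n-1}(\alpha,\beta)^\top h(-\beta)=(\alpha-\beta)^{2n-2}h(\infty)$. Also $W_{2n-1}(\alpha,\beta)^\top h(\infty)=h(1)$, because every entry of the last row of $W_{2n-1}(\alpha,\beta)$ equals $1$. This is exactly the M\"obius extension $-\beta\mapsto\infty$, $\infty\mapsto 1$. Do not import the value $h(\alpha)$ stated at the end of the proof of Lemma \ref{L:2.2}. That value is a slip, and with it the parameter map would fail to be a bijection whenever $\alpha\neq 1$.
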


\begin{proof}
Suppose (1) holds. Then $L$ is invertible and  
$L(\cR) \subseteq \cR$ by Theorem \ref{main1}.  
For any distinct elements
$\xi_1, \dots, \xi_N \in \IF$, we have
$L[h(\xi_1) \cdots h(\xi_k)] = [y_1 \cdots y_k]$.
If $i\ne j$, then  $y_i$ and $y_j$ cannot be multiples of each other.
Note that at most one $y_\ell$ is a multiple of $h(\infty)$. If it happens, we may 
replace $\xi_\ell$ by a different value, and assume that 
$y_j = \mu_j h(\zeta_j)$ with $\zeta_j\in \IF$ for $j = 1, \dots, k$.
Then 
$$L^{-1} [h(\zeta_1) \cdots h(\zeta_{k-1})] = 
[h(\xi_1)/\mu_1 \cdots h(\xi_{k-1})/\mu_{k-1}].$$
Suppose $k-1 \ge 4n-2$.
We may now assume that the rows of $L^{-1}$ correspond 
to polynomials $P_1(x), \dots, P_{2n-1}(x)$ such that
$$P_j(x) P_{j+2}(x) = P_j(x)^2 \quad \mbox{for} \; \;  x = \zeta_1, \dots, \zeta_{4n-2} \in \IF 
$$
with $\zeta_i \ne \zeta_j$, whenever $i\ne j$.
Using the arguments in the proof of Theorem \ref{main1}, we see that 
$L^{-1}$ has the form in (\ref{ps}), and $L^{-1}(\cR) \subseteq \cR$. 
Thus, condition (2) holds. 

\smallskip

Now, suppose condition (2) holds. 
Switch the roles of $L$ and $L^{-1}$ and apply 
the argument from the preceding paragraph.
We see that $L(\cR) \subseteq \cR$. Thus, $L(\cR) = \cR$, i.e., condition (3) holds.

Suppose (3) holds. By the discussion before the theorem,
we see that (1) holds.
\end{proof}

One may consider linear preservers on $\cT_n$ preserving the rank of matrices,
i.e., $T(A)$ and $A$ always have the same rank. By Theorem \ref{main-man}, we have the following.

\begin{theorem} A linear map $T: \cT_n\rightarrow \cT_n$ preserves the rank of matrices
if and only if $T$ has the form as in Theorem {\rm\ref{main-man} (a)}.
\end{theorem}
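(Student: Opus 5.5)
The plan is to reduce the statement to Theorem \ref{main-man}. There are two directions.

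For sufficiency, suppose $T$ has the form in Theorem \ref{main-man} (a). Then $T(A)=MAN$ with $M=\gamma F_nN^\top F_n$. Here $N$ is a product of $V_n(\alpha)$ or $W_n(\alpha,\beta)$, possibly $F_n$, and $D_n(r)$. Each factor is invertible: $\det V_n(\alpha)=1$, $\det W_n(\alpha,\beta)=(\beta-\alpha)^{n(n-1)/2}\neq 0$ since $\alpha\ne\beta$ (Lemma \ref{L:2.2}), and $r\neq0$. Hence $N$ is invertible, and so is $M$ because $\gamma\neq 0$. Since $T$ maps $\cT_n$ into $\cT_n$ by that theorem, and multiplication by invertible matrices preserves rank, $\operatorname{rank}T(A)=\operatorname{rank}A$ for every $A\in\cT_n$.

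For necessity, suppose $T$ preserves rank. Then $T$ sends rank one matrices to rank one matrices, so Theorem \ref{main-man} applies and $T$ is of form (a) or (b). It remains to exclude (b), which occurs only when $\IF=\IR$ and $T(A)=f_{\bf c}(A)B$ with $B$ of rank one. In that case every $T(A)$ has rank at most one. I would take $A=I_n\in\cT_n$, which has rank $n$; for $n\ge 2$ this contradicts rank preservation. Equivalently, rank preservation forces $T$ to be injective (only $0$ has rank $0$), whereas a map of type (b) has one-dimensional range while $\dim\cT_n=2n-1>1$.

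The degenerate case $n=1$ needs a separate check. There $\cT_1=\IF$, and every nonzero linear map $a\mapsto ca$ with $c\ne0$ preserves rank. Such a map already fits form (a) with $N=D_1(r)=I_1$ and $M=\gamma$. So the statement holds trivially, and the (b) maps are contained in (a) in this case.

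No step is a real obstacle, since Theorem \ref{main-man} carries all the weight. The only point needing care is that the matrices $M,N$ in (a) are invertible, which rests on the determinant formula for $W_n$ and the condition $\alpha\neq\beta$.
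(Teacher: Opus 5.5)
Your proposal is correct and follows the paper's proof. Rank preservation implies rank-one preservation, so Theorem \ref{main-man} leaves forms (a) and (b), and (b) is excluded because every image has rank at most one (e.g.\ $T(I_n)$); the converse holds because $M$ and $N$ are invertible. Your explicit invertibility check (via $\det W_n(\alpha,\beta)=(\beta-\alpha)^{n(n-1)/2}$ with $\alpha\neq\beta$) and your separate treatment of $n=1$ just fill in details the paper calls ``clear''.
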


\begin{proof} If $T$ preserves rank, it will preserve rank one matrices.
By Theorem \ref{main-man}, it has the form (a) or (b). Evidently, it cannot be of the form
(b). Otherwise, $T(A)$ has rank at most one for all $A$. Thus, it has the form as in (a).
The converse is clear. 
\end{proof}

\begin{theorem} \label{thm3} A linear map $T:{\cT}_n\rightarrow {\cT}_n$ satisfies $\det(T(A)) = \det(A)$ for all $A \in {\cT}_n$ 
if and only if $T$ has the form as in Theorem {\rm \ref{main-man} (a)} 
and 
$$1 =  \det(\gamma F_n D_n(r)N^\top F_n D_n(r)),$$ 
equivalently, 
$$\gamma^n r^{n(n-1)} = 1 \ \ \hbox{ and } \ \  
(\alpha-\beta)^{n(n-1)} = 1 \ \ \hbox{ in case } \ \ N = W(\alpha,\beta).$$
\end{theorem}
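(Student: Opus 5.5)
The plan is to reduce to the rank one preserver theorem by the classical Frobenius–Dieudonné route: a determinant preserver is forced to preserve singularity, and from that we show it preserves rank one matrices. Sufficiency is direct. If $T(A)=MAN$ with $M=\gamma F_nN^\top F_n$, then $\det T(A)=\det(M)\det(N)\det A$. Here $\det M=\gamma^n\det N$ because $\det F_n^2=1$. For $N=V_n(\alpha)D_n(r)$ or $N=V_n(\alpha)F_nD_n(r)$ we have $\det N^2=r^{n(n-1)}$, using $\det V_n(\alpha)=1$ and $(\det F_n)^2=1$. For $N=W_n(\alpha,\beta)D_n(r)$, Lemma \ref{L:2.2} gives $\det N^2=(\beta-\alpha)^{n(n-1)}r^{n(n-1)}$, and $(\beta-\alpha)^{n(n-1)}=(\alpha-\beta)^{n(n-1)}$ since $n(n-1)$ is even. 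So $\det(MN)=1$ is exactly the stated condition, in the form $\gamma^nr^{n(n-1)}\det(W)^2=1$. (As printed, the theorem splits this into two separate equalities; I would read the intended condition as their product being $1$ and check this against the displayed $\det(\cdots)$ formula.)

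For necessity, first show that $T$ is injective. Suppose $T(A_0)=0$ with $A_0\neq 0$. Choose an invertible Toeplitz $B$ such that $\det(B+tA_0)$ is a nonconstant polynomial in $t$; then $\det T(B+tA_0)=\det T(B)$ is constant, a contradiction. To find such a $B$, note that $A_0$ has some nonzero coefficient $a_k$. Take $B$ to be a suitable Toeplitz matrix, for instance a perturbation of $\sum_{j\neq k}$ of basis elements, or argue through a generic line and Zariski density (working over $\IC$ and extending to $\IR$ by polynomial identities). An alternative route is to show that the singular set $\{A:\det A=0\}$ is a hypersurface that is not a cylinder, meaning no translation direction leaves $\det$ invariant. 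This follows because $\det(X+tA_0)=\det X$ for all $X$, once differentiated, gives $\operatorname{tr}(\operatorname{adj}(X)A_0)=0$ for all Toeplitz $X$. Taking $X=h$-type matrices near the identity, or $X=I$, yields $\operatorname{tr}A_0=0$ and similar linear conditions, and iterating with $X=I+sZ^j$ forces every coefficient of $A_0$ to vanish. Hence $T$ is invertible and maps singular matrices onto singular matrices.

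Next, recover rank one matrices from the singular set. I would characterize rank one matrices $A\in\cT_n$ intrinsically: $A$ has rank one if and only if $\det(X+tA)$ has degree at most $1$ in $t$ for every $X\in\cT_n$. The forward direction follows from the rank one update $\det(X+tA)=\det X+t\operatorname{tr}(\operatorname{adj}(X)A)$. The converse is the main obstacle. If $\operatorname{rank}A=m\ge 2$, one must produce a Toeplitz $X$ for which the $t^m$ coefficient, a sum of products of $m\times m$ minors of $A$ with complementary minors of $X$, is nonzero. I would try $X$ built from the basis $\cB$ of shifts, for example $X=\sum_j s_jZ_n^j+\sum_j u_j(Z_n^\top)^j$ with indeterminate coefficients. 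Writing $A$ via its nonzero $m\times m$ minor, I would pick a pattern of shifts supported on the complementary rows and columns, and show the resulting polynomial in the $s_j,u_j$ has a monomial appearing exactly once. Since $T$ preserves $\det$ and is an invertible linear map, this degree property is preserved, so $T$ maps rank one matrices to rank one matrices.

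With that done, Theorem \ref{main-man} applies. Case (b) is excluded because $T$ is invertible (for $n\ge2$). So $T$ has the form (a), and the determinant computation of the first paragraph turns $\det T(I_n)=1$ into the stated constraint on $\gamma,r,\alpha,\beta$.
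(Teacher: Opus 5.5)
Your route is the paper's own: prove injectivity, characterize rank one by ``$\det(X+tA)$ has degree at most $1$ in $t$ for every $X\in\cT_n$'', apply Theorem~\ref{main-man}, exclude (b), and compute the determinant. Two parts of your write-up are sound.
\begin{itemize}
\item Your injectivity argument works. With $X=I+sZ_n^j$ one has $\operatorname{tr}(Z_n^mA_0)=(n-m)a_{-m}$, so the coefficient of $s$ forces $a_{-j}=0$. This is more explicit than the paper, which asserts without proof that a singular $B$ with $A+B$ invertible exists.
\item Your reading of the determinant condition is correct. The paper's own computation gives $\gamma^n r^{n(n-1)}\det(N^\top N)=1$. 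So in the $W$ case only the product $\gamma^n r^{n(n-1)}(\alpha-\beta)^{n(n-1)}$ must equal $1$; the two separate equalities in the statement are not necessary.
\end{itemize}

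The genuine gap is the step you flag yourself: the converse of the rank-one characterization is never proved. Your monomial plan is not convincing as it stands. The shifts $Z_n^j$ occupy whole diagonals, so they cannot be confined to the rows and columns complementary to a chosen minor. Nothing you say controls cancellation among the many pairs $(I,J)$ that contribute to the $t^m$ coefficient. The paper's Claim 2 likewise writes out only the forward direction.

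The missing idea is to use sums of rank-one Toeplitz matrices instead of shifts. Let $\operatorname{rank}A=m\ge 2$ and write $A=PQ^\top$ with $P,Q\in\IF^{n\times m}$. Put $v(\xi)=[1,\xi,\dots,\xi^{n-1}]^\top$, $u(\xi)=F_nv(\xi)$ and $k=n-m$. Set $U=\begin{bmatrix}u(\xi_1)&\cdots&u(\xi_k)\end{bmatrix}$, $V=\begin{bmatrix}v(\xi_1)&\cdots&v(\xi_k)\end{bmatrix}$, and $C=UV^\top=\sum_i X_{\xi_i}\in\cT_n$. Then
\[
\det(tA+C)=\det\left(\begin{bmatrix}P&U\end{bmatrix}\begin{bmatrix}tI_m&0\\0&I_k\end{bmatrix}\begin{bmatrix}Q&V\end{bmatrix}^\top\right)=t^m\det\begin{bmatrix}P&U\end{bmatrix}\det\begin{bmatrix}Q&V\end{bmatrix}.
\]
The vectors $v(\xi)$, and hence the $u(\xi)$, span $\IF^n$. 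So the columns of $P$ (resp.\ $Q$) can be completed to a basis by vectors $u(\xi_i)$ (resp.\ $v(\xi_i)$). Both determinants are therefore nonzero polynomials in $(\xi_1,\dots,\xi_k)$, and so is their product. Since $\IF$ is infinite, some choice of the $\xi_i$ makes $\det(tA+C)$ a nonzero multiple of $t^m$; take $C=0$ if $m=n$. With this lemma in place, the rest of your argument goes through.
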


\begin{proof}
Assume that  $T:{\cT}_n\rightarrow {\cT}_n$ is linear and preserves
the determinant.
We  establish three assertions to prove the necessity.

\medskip\noindent
{\bf Claim 1.}  $T$ is bijective. 

If not, there is $A \in {\cT}_n$ such that $T(A) = 0$. Then we 
can find a singular $B \in {\cT}_n$ such that
$A+B$ is invertible. But then $0 \ne \det(A+B) = \det(T(A+B)) = \det(T(B)) = 0$,
which is a contradiction.

\medskip\noindent
{\bf Claim 2.}  $T$ preserves rank one matrices in  $\cT_n$.

Note 
that a matrix $A \in {\cT}_n$ is rank one if and only if there are invertible matrices
$R, S$ such that $A = R\be_1 \be_1^\top S$. 
Thus, $A$ has rank one if and only if $\det(xA + B) 
= \det(R)\det(x\be_1 \be_1^\top + R^{-1}BS^{-1}) \det(S)
= ax + b_B$ for all $B \in {\cT}_n$. Since $T$ is bijective and 
preserves determinant, 
$\det(xT(A) + T(B)) = ax + b$ for all matrices $B$. So, $T(A)$ has rank one.

\medskip\noindent
{\bf Claim 3} The mapping $T$ has the form as in Theorem \ref{main-man} (a) with 
$1 =  \det(\gamma F_n D_n(r)N^\top F_n D_n(r))$; equivalently, 
$\gamma^n r^{n(n-1)} = 1$ and $(\alpha-\beta)^{n(n-1)} = 1$ 
in the case $N = W(\alpha,\beta)$.

By Claim 2, $T$ preserves rank one matrices so that $T$ has the form in  
Theorem \ref{main-man} (a).  Moreover, for any $A \in \cT_n$, 
\begin{eqnarray*}
\det(A) & = & \det(T(A)) = 
\det(\gamma F_n D_n(r) N^\top F_n)\det(A) \det(N D_n(r))\\
& = & \det(A) \det(\gamma I_n)\det(F_n)^2 \det(D_n(r)^2)
\det(N^\top N) \\
& = & \det(A) \gamma^n r^{n(n-1)} \det(N^\top N).
\end{eqnarray*}
If $N = V_n(\alpha)$, then $\det(N^\top N) = 1$.
If $N = W(\alpha,\beta)$,  then 
$\det(N^\top N) = (\alpha-\beta)^{n(n-1)}$
by Lemma \ref{L:2.2}.
The converse is clear. 
\end{proof}

\subsection{Maps from Toeplitz to general matrices}

One may consider linear maps $T: {\cT}_n\rightarrow M_n$. Such preserver problems arise in applications when one
can apply linear transformations to Toeplitz matrices, but have no guarantee that the
transformed matrices will be Toeplitz. For example, linear isometries $T: {\cT}_n\rightarrow M_n$
were characterized in \cite{Fa}. For linear rank one preservers $T: {\cT}_n\rightarrow M_n$,
we have the following.

\begin{theorem} Suppose $n > 2$ and  $T: {\cT}_n \rightarrow M_n$ is a linear map.
Then $T$ sends rank one matrices to rank one matrices if and only if 
$T$ has the form $A \mapsto MAN$ such that
$M, N\in M_n$ satisfy the following:
for any $x\in \IF$
$$M[x^{n-1}, \dots, x, 1]^\top = u(x) 
= [u_1(x), \dots, u_n(x)]^\top \ne [0,\dots,0]^\top,$$
$$[1, x, \dots, x^{n-1}]N = v(x)^\top = [v_1(x), \dots,  v_n(x)] \ne [0,\dots,0],$$
where $u_1(x), \dots, u_n(x)$ are polynomials in $x$ with coefficients associated with the 
rows of $M$ and $v_1(x), \dots, v_n(x)$ are polynomials in $x$ with coefficients 
associated with the columns of $N$.
\end{theorem}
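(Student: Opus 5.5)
The plan is to pass to polynomial vectors, as in the proof of Theorem~\ref{main1}, replacing the coordinate vector $h(\xi)$ by the matrix $X_\xi$ of \eqref{xxi}. Write ${\bf b}(x)=[x^{n-1},\dots,x,1]^\top$ and ${\bf c}(x)=[1,x,\dots,x^{n-1}]^\top$, so that $X_x={\bf b}(x){\bf c}(x)^\top$.

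\emph{Sufficiency.} If $T(A)=MAN$, then
$$T(X_\xi)=M{\bf b}(\xi)\,{\bf c}(\xi)^\top N=u(\xi)v(\xi)^\top,$$
which has rank one because $u(\xi)\neq 0$ and $v(\xi)\neq 0$. By Proposition~\ref{P:1.1}, the remaining rank one matrices are the multiples of ${\bf e}_1{\bf e}_n^\top$. For these, $T({\bf e}_1{\bf e}_n^\top)=(M{\bf e}_1)({\bf e}_n^\top N)$, where $M{\bf e}_1$ and ${\bf e}_n^\top N$ are the top-degree coefficient vectors of $u$ and $v$. I will argue that these are nonzero by a limiting argument like the one before \eqref{h}. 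If this does not follow from the stated hypotheses, it has to be added as an explicit condition.

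\emph{Necessity, step 1.} Let $P(x)=T(X_x)$. This is an $n\times n$ matrix whose entries are polynomials in $x$ of degree at most $2n-2$, because $X_x$ has coordinate vector $h(x)$ in the basis $\cB$ and $T$ is linear. By hypothesis, $P(\xi)$ has rank exactly one for every $\xi\in\IF$. Hence all $2\times2$ minors of $P$ vanish identically, so $P$ has rank one over the field $\IF(x)$.

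\emph{Necessity, step 2.} Since $\IF[x]$ is a UFD, I will factor $P(x)=u_0(x)v_0(x)^\top$ with polynomial vectors $u_0,v_0$, where the entries of $u_0$ have greatest common divisor $1$. Concretely, take a nonzero column of $P$ and divide it by the gcd of its entries. Then $\deg u_0+\deg v_0\le 2n-2$, where $\deg$ is the maximal degree of the entries. Moreover, $u_0(\xi)\ne0$ and $v_0(\xi)\neq0$ for all $\xi$, since $P(\xi)\neq 0$.

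\emph{Necessity, step 3.} This is the main obstacle: I need to rebalance the factorization so that both factors have degree at most $n-1$. If $\deg u_0=d>n-1$, I try to move a scalar polynomial factor $q$ from $v_0$ into $u_0$, or the other way round, replacing the pair by $(q\,u_0,\;v_0/q)$ or $(u_0/q,\;q\,v_0)$ as needed. For this I use the entrywise identities $P_{ij}=u_{0,i}v_{0,j}$ together with $\deg P_{ij}\le 2n-2$, and I compare leading coefficients through the image of $h(\infty)$. Over $\IC$ every common factor splits into linear factors, so the degrees can be adjusted one at a time. Over $\IR$ this can fail. For example, a map $A\mapsto f_{\bf c}(A)B$, as in Theorem~\ref{main-man}(b), gives $P(x)=p(x)B$ with $p$ having no real roots. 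If $n-1$ is odd, $p$ cannot be split into two real factors of degree $n-1$. I expect this case to need either an extra hypothesis or a separate analysis, and I would check it first with $n=4$.

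\emph{Necessity, step 4.} Once $\deg u\le n-1$ and $\deg v\le n-1$, define $M$ by requiring $M{\bf b}(x)=u(x)$, that is, the columns of $M$ are the coefficient vectors of $u$. Define $N$ analogously from $v$. Then $T(X_\xi)=MX_\xi N$ for all $\xi$. The matrices $X_\xi$ span $\cT_n$, because their coordinate vectors $h(\xi)$ at $2n-1$ distinct points form an invertible Vandermonde matrix. Hence $T(A)=MAN$ for all $A$, and the nonvanishing conditions on $u$ and $v$ follow from step 2.
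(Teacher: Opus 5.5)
Your suspicion about sufficiency is correct, and no limiting argument can settle it. The hypotheses concern only finite $x$, so they say nothing about the top coefficient vectors $M{\bf e}_1$ and ${\bf e}_n^\top N$. For example, $M=Z_n^\top$ and $N=I_n$ give $u(x)=[x^{n-2},\dots,x,1,0]^\top\neq 0$ and $v(x)\neq 0$ for all $x$. Yet $T({\bf e}_1{\bf e}_n^\top)=Z_n^\top{\bf e}_1{\bf e}_n^\top=0$. So both $M{\bf e}_1\neq 0$ and ${\bf e}_n^\top N\neq 0$ (the condition ``at $x=\infty$'') must be added to the statement.

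The genuine gap is Step~3, and it is not confined to $\IR$. Your claim that over $\IC$ ``the degrees can be adjusted one at a time'' is false, because after Step~2 there is no common factor left to move. If a nonconstant $q$ divided every entry of $v_0$, then $P(\xi)=u_0(\xi)v_0(\xi)^\top=0$ at a root $\xi\in\IC$ of $q$. Hence $u_0$ and $v_0$ are both primitive, the factorization $P=u_0v_0^\top$ is unique up to a constant, and the split of $\deg u_0+\deg v_0=2n-2$ is dictated by $T$.

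That split can be completely unbalanced. Let $T(A)=(a_{n-1}{\bf e}_1+a_{1-n}{\bf e}_2){\bf e}_1^\top$. By Proposition~\ref{P:1.1}, every rank one $A\in\cT_n$ has $a_{n-1}\neq 0$ or $a_{1-n}\neq 0$, so $T$ preserves rank one matrices over either field. Here $T(X_x)=({\bf e}_1+x^{2n-2}{\bf e}_2){\bf e}_1^\top$. If $T(A)=MAN$, the $(1,1)$ entry gives $u_1v_1\equiv 1$, so $v_1$ is a nonzero constant. The $(2,1)$ entry then forces $\deg u_2=2n-2>n-1$, which is impossible.

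So the necessity half of the statement is false over $\IC$ as well as over $\IR$. Over $\IR$ your own example $f_{\bf c}(A)B$ also works, provided $\deg p=2n-2$ and $n$ is even (e.g.\ $p=(x^2+1)^{n-1}$). No argument can complete Step~3.

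The paper's proof has exactly the same hole. It obtains $T(X_x)=u(x)v(x)^\top$ from a factorization result and says ``the result follows,'' without ever bounding $\deg u$ and $\deg v$ by $n-1$. Your Steps~1--2, together with $T({\bf e}_1{\bf e}_n^\top)\neq 0$, establish the correct characterization. $T$ preserves rank one matrices iff $T(X_x)=u(x)v(x)^\top$ with polynomial vectors $u,v$ that have no zeros in $\IF$ and satisfy $\deg u+\deg v=2n-2$. Such a $T$ has the form $A\mapsto MAN$ exactly when one can take $\deg u=\deg v=n-1$.
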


\it Proof. \rm  The sufficiency is clear. We prove the necessity.

Suppose $L$ is the matrix of transformation of $T$ with respect to the basis $\cB$ of $\mathcal T_n$ as in \eqref{basis}
and the standard basis for $M_n$.
Then $L\in\mathbb M_{n^2,(2n-1)}$, the set of $n^2 \times (2n-1)$ matrices over $\IF$.  
Consider rank one matrix in ${\cT}_n$ of the form $X_x$ \eqref{xxi}. The vector of $X$ with respect to $\cB$
has the form $[1, x, \dots, x^{2n-2}]^\top$, and 
$L(X) = [P_1(x),\dots, P_{n^2}(x)]^\top$
for some polynomials $P_1(x), \dots, P_{n^2}(x)$, each has degree at most $2n-2$.
It follows that 
$$
T(X_x) = A_0 + A_1x + \cdots + A_{2n-2}x^{2n-2},$$
for some matrices $A_0, \dots, A_{2n-1}$.
Since $T(X)$ is always rank one, it follows that $T(X) = u(x) v(x)^\top$ for some 
$u(x) = [u_1(x), \dots, u_n(x)]^\top$ and $v(x) = [v_1(x), \dots, v_n(x)]^\top$;
see \cite{GLR}. The result follows.
\qed

Note that the map $T$ in the theorem is injective if and only if the matrix of transformation
$L$ has rank $2n-1$. The map $T$ preserves determinant if and only if $\det(MN) = 1$.

\subsection{Maps on rectangular Toeplitz matrices}

Let $2 \le m \le n$, and let $\cT_{m,n}$ be the set of 
$m \times n$ Toeplitz matrices, that is, matrices of the form
\[
A = \begin{bmatrix} 
a_0 & a_{-1} & a_{-2} & \dots & \dots & \dots & a_{-(n-1)} \\
a_1 & a_0 & a_{-1} & \dots & \dots & \dots & a_{-(n-2)} \\
a_2 & a_1 & a_0 & \dots & \dots & \dots & a_{-(n-3)} \\
\vdots & \vdots & \vdots & \ddots & \vdots & \vdots &\vdots \\
a_{m-1} & a_{m-2} & a_{m-3} & \dots & a_0 & \dots & a_{n-m}
\end{bmatrix} . \]
Then $\cT_{m,n}$ has dimension $m+n-1$ with a basis consisting of 
matrices of the form
\begin{equation}
X_\xi=\begin{bmatrix} \xi^{m-1} \cr \vdots \cr \xi \cr 1\cr\end{bmatrix}
[1 \ \xi \ \dots \xi^{n-1} ], \qquad \xi \in \IF.
\label{class1}
\end{equation}
In fact, if one chooses any $m+n-1$ distinct values in $\IF$ then 
one gets a basis for $\cT_{m,n}$.  
A basis $\tilde \cB$ may be obtained by
removing the last $n-m$ rows of the matrices in $\cB$ and then remove 
the zero matrices.

\smallskip

Linear maps $T: \cT_{m,n}\to \cT_{m,n}$ may be studied by considering 
$L = [T]_{\tilde \cB}$. The proofs from the previous sections go through.
For example, Theorem \ref{main1} and Theorem \ref{main-man} become:

\begin{theorem} Let $T: \cT_{m,n} \rightarrow \cT_{m,n}$ be a linear map with $2 \le m \le n$
and let $L:=[T]_{\tilde \cB} \in M_{m+n-1}$. Then the set of rank one matrices 
in $\cT_{m,n}$ has coordinate vectors in the set 
$$\tilde \cR = \{[1, \xi, \dots, \xi^{m+n-2}]^\top: \xi \in \IF \cup \{\infty\}\}.$$
The linear function $T$ 
maps the set of rank one matrices in $\cT_{m,n}$ into itself,
i.e., $L(\tilde \cR) \subseteq \tilde \cR$ if and only if one of the following conditions
holds.
\begin{itemize}
\item[{\rm (a)}] There are nonzero $\gamma, r \in \IF$ such that $T$ has the form 
$$
A\mapsto \gamma F_mD_m(r)N_1^\top F_m A N_2 D_n(r), 
$$
where for $k = m, n$, the matrices $D_k(r)$  and $F_k$ are defined in \eqref{a} and 
$N_1$ and $N_2$ satisfy one of the following: 
\begin{itemize}
\item[{\rm (1)}] $N_1=V_m(\alpha)^\top,\quad N_2=V_n(\alpha)$;
\item[{\rm (2)}] $N_1=(F_m V_m(\alpha)^\top,\quad N_2=V_n(\alpha)F_n$;
\item[{\rm (3)}] $N_1=W_m(\alpha,\beta)^\top,\quad N_2=W_n(\alpha,\beta)$.
\end{itemize}

\smallskip\noindent
Equivalently, for $\ell = m+n-1$, $L^\top$ equals to 

\smallskip

{\rm (i)} $\gamma V_{\ell}(\alpha) D_{\ell}(r)$, \ \
{\rm (ii)} $\gamma V_{\ell}(\alpha)F_{\ell}D_{\ell}(r)$, \ \ \hbox{ or } \ \ 
{\rm (iii)} $\gamma  W_{\ell}(\alpha,\beta)D_{\ell}(r)$.

\smallskip

\item[{\rm (b)}] $\IF = \IR$
and $T$ has the form $A \mapsto f_{\bf c}(A)B$ for some rank one matrix $B \in \cT_n$
and an admissible linear functional $f_{\bf c}$ on $\cT_{m,n}$, i.e., a linear functional 
$$f_{\bf c}: \; A=\left[a_{i-j}\right]_{i=1,\ldots,n}^{j=1,\ldots,m}\mapsto \sum_{j=1}^{m+n-1}c_ja_{m-j},\quad {\bf c}=(c_1,\ldots,c_{m+n-1})
$$
such that the polynomial $p(x):=\sum_{j=0}^{m+n-2}c_{j+1}x^j$ has no real zeros; equivalently, for any matrix $X_\xi$ of the form 
\eqref{class1}, we have $f_{\bf c}(X_\xi)\ne 0$.
\end{itemize}
\end{theorem}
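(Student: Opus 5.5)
The plan is to reduce the rectangular case to the square-case machinery by working entirely in coordinates, in the same way the proofs of Theorem \ref{main1} and Theorem \ref{main-man} work for $\cT_n$, with $2n-1$ replaced by $\ell=m+n-1$.

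\emph{Step 1: rank one matrices in $\cT_{m,n}$.} I would repeat the proof of Proposition \ref{P:1.1} for a rank one $A=\mathbf b^\top\mathbf c$ with $\mathbf b\in\IF^m$ and $\mathbf c\in\IF^n$, splitting into the cases $c_1=0$ and $c_1\ne0$. Two points need checking.
\begin{itemize}
\item If $c_1=0$, the Toeplitz structure again forces $\mathbf b=b_1\be_1^\top$ and $\mathbf c=c_n\be_n^\top$. This uses $m\ge2$, so that a second row exists to propagate the zeros.
\item If $c_1\neq0$, the relations $a_{j,1}=a_{j+1,2}$ give $\mathbf b=b_m[\xi^{m-1},\dots,1]$. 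Equality along diagonals, which has length at least $2$ since $m\ge2$, then gives $\mathbf c=c_1[1,\xi,\dots,\xi^{n-1}]$.
\end{itemize}
Writing coordinates in $\tilde\cB$, ordered from the bottom-left corner to the top-right, the coordinate vectors are $\mu h(\xi)$ with $h(\xi)=[1,\xi,\dots,\xi^{\ell-1}]^\top$ or $h(\infty)=\be_\ell$. So the coordinate set is $\tilde\cR$, up to the nonzero scalars $\mu$, which is how the statement should be read. Criterion \eqref{calr-2}, Proposition \ref{R:1.3} and Lemma \ref{lem0} are purely statements about $\IF^{\ell}$, so they hold verbatim with $2n-1$ replaced by $\ell$.

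\emph{Step 2: the coordinate form.} The proofs of Lemmas \ref{L:2.1}, \ref{L:2.2} and \ref{lem2} and of Theorem \ref{main1} use only three things: the size $\ell$ of the ambient space, the polynomials $P_j$ of degree $\le\ell-1$, and the relations $P_{j-1}P_{j+1}=P_j^2$. They therefore transfer directly.
\begin{itemize}
\item The singular case gives rank one $L=h\mathbf p$ with $p(x)$ of degree $\ell-1\ge1$ having no roots. This is impossible over $\IC$, and over $\IR$ it is exactly the admissibility in (b).
\item In the invertible case the exponents $\ell_j$ form an arithmetic sequence of $\ell$ distinct integers in $[0,\ell-1]$. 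This produces the three families (i)--(iii) for $L^\top$.
\end{itemize}

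\emph{Step 3: matrix form, the main obstacle.} The remaining work is to translate (i)--(iii) into the form $A\mapsto \gamma F_mD_m(r)N_1^\top F_mAN_2D_n(r)$. As in the proof of Theorem \ref{main-man}, I would check the action on the spanning set $X_\xi$ from \eqref{class1}, which suffices because $X_\xi$ span $\cT_{m,n}$.
\begin{itemize}
\item For (iii), $[1,\xi,\dots,\xi^{n-1}]W_n(\alpha,\beta)=[(\xi+\beta)^{n-1},\dots,(\xi+\alpha)^{n-1}]$ by \eqref{9}. The analogous column identity uses $F_mW_m(\alpha,\beta)^\top F_m$, giving $[(\xi+\alpha)^{m-1},\dots,(\xi+\beta)^{m-1}]^\top$.
\item Their product is the rank one Toeplitz matrix whose coordinate vector is $[(\xi+\beta)^{\ell-1-k}(\xi+\alpha)^k]_{k}$. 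That vector is exactly $W_\ell(\alpha,\beta)^\top h(\xi)$.
\item Families (i) and (ii) are handled the same way using \eqref{6}. The scalings $D$ and $\gamma$ are immediate.
\end{itemize}
The delicate points are two. First, the index conventions must be aligned so that the $m$-part and the $n$-part combine to exponents summing to $\ell-1$; this is where $m$ and $n$ enter asymmetrically. Second, one needs the converse: every map of form (a) has matrix of type (i)--(iii). This follows because both sides agree on all $X_\xi$.
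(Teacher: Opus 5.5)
Your route is the paper's: its whole proof of this theorem is the remark that the arguments for $\cT_n$ go through with $2n-1$ replaced by $\ell=m+n-1$. Your Steps 1--3 carry out exactly that transplantation, and they correctly locate where $m\ge 2$ enters the analogue of Proposition \ref{P:1.1}. The computations you sketch are right, but they do not prove the theorem as printed, and in Step 3 you assert an agreement that does not hold. Your left factor $F_mW_m(\alpha,\beta)^\top F_m$ corresponds, in the printed formula $A\mapsto\gamma F_mD_m(r)N_1^\top F_mAN_2D_n(r)$, to $N_1=W_m(\alpha,\beta)$, not to the printed $N_1=W_m(\alpha,\beta)^\top$; (1) and (2) have the analogous defect. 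With the printed choices the map does not even send $\cT_{m,n}$ into itself. For $m=n=2$, $\gamma=r=1$, $\alpha=0$, $\beta=1$, the printed (3) sends $I_2$ to $\left[\begin{smallmatrix}1&1\\0&1\end{smallmatrix}\right]\left[\begin{smallmatrix}1&0\\1&1\end{smallmatrix}\right]=\left[\begin{smallmatrix}2&1\\1&1\end{smallmatrix}\right]$, which is not Toeplitz. So your claim that ``both sides agree on all $X_\xi$'' is false for the stated (1)--(3). What your argument actually establishes is the version with $N_1=V_m(\alpha)$, $V_m(\alpha)F_m$, $W_m(\alpha,\beta)$, which for $m=n$ is consistent with $M=\gamma F_nN^\top F_n$ in Theorem \ref{main-man}. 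You should state explicitly that this is the version you prove.

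The same kind of mismatch occurs in (b). Your singular-case argument produces a $p$ with no real zeros \emph{and} exact degree $\ell-1$, i.e.\ additionally $c_\ell=f_{\bf c}(\be_1\be_n^\top)\ne 0$. So it is not ``exactly the admissibility in (b)''. That extra condition is indispensable for the ``if'' direction of (b), which you never check and which is false as printed. For instance, ${\bf c}=(1,0,\dots,0)$ gives $p\equiv 1$, which is admissible as printed, yet $T(\be_1\be_n^\top)=0$. The paper's square-case sentence ``If (b) holds, then clearly $L(\cR)\subseteq\cR$'' has the same slip. One consequence is worth recording: if $m+n$ is odd, then $\ell-1$ is odd, every real polynomial of that exact degree has a real root, and alternative (b) cannot occur at all.
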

\subsection{Hankel matrices}
The results on linear preservers on Toeplitz matrices are easily translated to Hankel matrices, matrices in which each ascending skew-diagonal from left to right is constant; that is, 
matrices of the form $A=[a_{i+j}]$ for some $a_2, \dots, a_{2n}$. 
Evidently, $A$ is Hankel if and only if $F_nA$ is Toeplitz.
Thus, $T: \cH_n\rightarrow \cH_n$ is a linear map  if and only if
$\tilde T$ defined by $A \mapsto F_nT(F_nA)$ is a linear map on ${\cT}_n$.
In particular, $T$ is a rank one preserves 
on $\cH_n$ if and only $\tilde T$ is a rank one preserves
on ${\cT}_n$; $T$ is a determinant preserver on $\cH_n$ if and only
if  $\tilde T$ is a determinant preserver on ${\cT}_n$. So, 
we can translate the preserver results on ${\cT}_n$ in the previous sections 
to results on $\cH_n$.
For instance, we have the following.

\begin{theorem} Suppose $n > 2$ and
$\tilde T: {\cH}_n \rightarrow \cH_n$ is a linear map.
Then $\tilde T$ sends rank one matrices to rank one matrices
if and only if $\tilde T$ has the form
$$A \mapsto \gamma N^\top A N,$$
where $\gamma \in \IF$ is nonzero and $N$
has one of the following forms:
$${\rm (1)} \; N=V_n(\alpha)D_n(r),
\quad {\rm (2)} \;N=V_n(\alpha)F_nD_n(r),\quad
{\rm (3)} \;  N=W_n(\alpha,\beta)D_n(r),
$$
where $D_n(r)$  and $F_n$ are defined in \eqref{a}.

\smallskip\noindent
The  map $\tilde T$
preserves the determinant if and only it has the above form with $\det(\gamma N^\top N) = 1$, equivalently,
$\gamma^n r^{(n(n-1)} = 1$ and
$(\alpha-\beta)^{n(n-1)} = 1$ in case $N = W(\alpha,\beta)$.
\end{theorem}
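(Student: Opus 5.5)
We transfer everything from $\cT_n$ to $\cH_n$ through the bijection $\Phi:\cH_n\to\cT_n$, $\Phi(A)=F_nA$. Given $\tilde T$ on $\cH_n$, we set $T=\Phi\circ\tilde T\circ\Phi^{-1}$, i.e.\ $T(B)=F_n\tilde T(F_nB)$ for $B\in\cT_n$. Since $F_n$ is an invertible involution, $\Phi$ preserves rank. It also multiplies the determinant by the constant $\det F_n=\pm1$, and this sign appears on both sides of any determinant identity.

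\textbf{Step 1: reduction.} Consequently $\tilde T$ preserves rank one matrices (respectively, the determinant) on $\cH_n$ if and only if $T$ does on $\cT_n$. For the determinant, note that $\det T(B)=\det F_n\cdot\det\tilde T(F_nB)$ and $\det B=\det F_n\cdot\det(F_nB)$, so the signs cancel.

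\textbf{Step 2: apply Theorem \ref{main-man}.} This gives either the degenerate real case (b) or the form $T(B)=MBN$ with $M=\gamma F_nN^\top F_n$ and $N$ of type (1)--(3). Translating back,
\[
\tilde T(A)=F_nT(F_nA)=F_n\gamma F_nN^\top F_nF_nAN=\gamma N^\top AN .
\]
This is the asserted form. The determinant condition is then read off from Theorem \ref{thm3}: $\det(\gamma N^\top N)=\gamma^n\det(N)^2$. By Lemma \ref{L:2.2} and $\det V_n(\alpha)=1$, this equals $\gamma^nr^{n(n-1)}$, times $(\beta-\alpha)^{n(n-1)}$ in the $W$ case. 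Here $\det F_n^2=1$.

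\textbf{Step 3: sufficiency.} Conversely, any map $A\mapsto\gamma N^\top AN$ of this form corresponds to a Toeplitz map of form (a), so it sends $\cH_n$ into $\cH_n$ and preserves rank one.

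\textbf{Main obstacle.} The delicate point is the degenerate case (b) of Theorem \ref{main-man} over $\IR$. Such maps $A\mapsto f_{\bf c}(F_nA)F_nB$ do preserve rank one Hankel matrices, yet the statement omits them. So as written the theorem is only correct over $\IC$, or over $\IR$ with an additional hypothesis such as invertibility of $\tilde T$. The hypothesis $n>2$ does not rule these maps out.

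In writing the proof I would therefore either add the analogous clause (b) for $\IF=\IR$, or assume $\tilde T$ is invertible, or restrict to $\IF=\IC$, and then carry out Steps 1--3 as above. For the determinant part no such issue arises, since Claim 1 of Theorem \ref{thm3} forces bijectivity and thereby excludes case (b).
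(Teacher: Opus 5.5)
Your route is the paper's own. The paper's entire argument is the remark that $A\in\cH_n$ iff $F_nA\in\cT_n$, so rank-one and determinant preservers correspond under $A\mapsto F_nT(F_nA)$, and Theorems \ref{main-man} and \ref{thm3} are then translated. Your computation $F_n\gamma F_nN^\top F_nF_nAN=\gamma N^\top AN$ is exactly that translation.

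Your objection is also correct: over $\IR$, case (b) of Theorem \ref{main-man} translates to maps $A\mapsto f_{\bf c}(F_nA)F_nB$, which the statement omits. To make this a genuine example you need $c_{2n-1}\neq 0$, because $f_{\bf c}(F_n{\bf e}_n{\bf e}_n^\top)=c_{2n-1}$. The paper's notion of ``admissible'' drops this requirement, but $p(x)=1+x^{2n-2}$ works.

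However, the statement has two further defects of the same kind, and you absorb them silently instead of flagging them. So your verdict that the statement is ``correct over $\IC$'' is too generous.

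First, the statement does not require $r\neq 0$, nor $\alpha\neq\beta$ in form (3), and without these the ``if'' direction fails.
\begin{itemize}
\item If $r=0$, then $N=V_n(\alpha)D_n(0)={\bf e}_1{\bf e}_1^\top$. The map becomes $A\mapsto\gamma a_2{\bf e}_1{\bf e}_1^\top$, which kills the rank-one Hankel matrix ${\bf e}_n{\bf e}_n^\top$.
\item If $\alpha=\beta$, then \eqref{9} gives $[1,x,\dots,x^{n-1}]W_n(\alpha,\alpha)=(x+\alpha)^{n-1}[1,\dots,1]$. Hence $uu^\top$ with $u=[1,-\alpha,\dots,(-\alpha)^{n-1}]^\top$ is sent to $0$.
\end{itemize}
Your Step 3 (``any map of this form corresponds to a Toeplitz map of form (a)'') holds only because it silently imports these conditions from Theorem \ref{main-man}.

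Second, your own determinant computation yields the single condition $\gamma^n r^{n(n-1)}(\beta-\alpha)^{n(n-1)}=1$ in case (3). This is strictly weaker than the two separate equations in the statement. Take $n=3$, $\alpha=0$, $\beta=2$, $r=1$, $\gamma=1/4$. Then $\det(\gamma N^\top N)=4^{-3}\cdot 8^2=1$, so $A\mapsto \gamma N^\top AN$ preserves the determinant, yet $\gamma^n r^{n(n-1)}=1/64$. The parameters cannot be re-chosen: the map determines the linear fractional map $\xi\mapsto r(\xi+\alpha)/(\xi+\beta)$ on rank-one matrices, and that in turn fixes $\gamma$. So the ``equivalently'' clause cannot be ``read off''; it is false, and the same slip appears in Theorem \ref{thm3}. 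Only the condition $\det(\gamma N^\top N)=1$ is correct, and your write-up should say so explicitly.
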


\subsection{Concluding remarks}
A close examination of our proofs reveals that they actually hold for matrices over a field with
sufficiently many elements. For example, the proofs in Section 2 hold
for any field with at least $2n$ elements,
and the proofs in Section 3 hold for any field with at least
$(2n-1)^2+1$ elements.  It would be interesting to determine the preserver results for finite fields in general.
It is also curious to consider
rank-preserver questions for matrices over a semiring or a
division ring, in particular, over the skew field $\mathbb H$ of quaternions; e.g., see \cite{Gu}
It can be shown that rank-one Toeplitz matrices in this setting still are either of the form 
$$
A_1=\mu {\bf e}_1{\bf e}_n^\top\quad\mbox{or}\quad
A_2= \mu\sbm{\xi^{n-1}&\ldots & \xi & 1}^\top
\sbm{1&\xi &\ldots &\xi^{n-1}}\lambda
$$
for nonzero scalars $\mu,\lambda$. However, the polynomial approach in the proof of Theorem \ref{main1} requires significant adjustments
(if this is at all possible).

\smallskip

Finally, similar questions can be considered on other structured matrices;
e.g., see \cite{Pan,SJS}.

\bigskip\noindent{\bf \large Acknowledgement:} We thank Dr.\  Vishwa Dewage for some discussion.
Li is an affiliate member
of the Institute for Quantum Computing of the University of Waterloo, 
and also an affiliate member of the Quantum Science \& Engineering Center of George Mason University; his research
was partially supported by the Simons Foundation Grant 851334.

\bigskip\noindent{\bf \large Data availability:} Data sharing not applicable to this article as no datasets
were generated or analyzed during the current study.

\end{document}